\documentclass{siamart0516}
\usepackage[margin=1in]{geometry}
\usepackage{graphicx} 

\newcommand{\bsub}{\begin{subequations}}
\newcommand{\esub}{\end{subequations}$\!$}

\newcommand{\IE}{\mathbb{E}}
\newcommand{\IV}{\mathbb{V}}
\newcommand{\Sn}{\mathbb{S}^{n-1}}
\newcommand{\Sone}{\mathbb{S}^1}

\newcommand{\al}[1]{\textcolor{black}{#1}}

\usepackage[sort&compress,numbers]{natbib}
\usepackage{graphicx,booktabs}
\usepackage{amsmath}
\usepackage{amssymb,subfigure}
\usepackage{comment}
\usepackage{amsfonts}
\usepackage{pgfplots}
\usepackage{color}
\usepackage[normalem]{ulem}
\definecolor{sectioncolor}{rgb}
{0.9, 0.3, 0.8}
\definecolor{hillencolor}{rgb}{0.50, 0.03, 0.46}
\definecolor{paintercolor}{rgb}{0.1, 0.7, 0.2}
\definecolor{mypink}{rgb}{0.9, 0.3, 0.6}
\definecolor{lightgray}{rgb}{0.1, 0.1, 0.1}
\newcommand{\RR}{\mathbb{R}}
\newcommand{\ep}{\varepsilon}
\newcommand{\ff}{\varphi}

%
%
%

\usepackage{titlesec}
\pgfplotsset{compat=1.18}

\begin{document}

\title{Mean First Passage Times for Transport Equations}
\date{\today}
\author{Thomas Hillen\thanks{Department of Mathematics, University of Alberta, Canada. {\tt thillen@ualberta.ca}}\and
Maria R. D'Orsogna\thanks{Department of Mathematics, California State University at Northridge, Los Angeles, CA, 91330, USA. {\tt dorsogna@csun.edu}}\and
Jacob C. Mantooth\thanks{Deptartment of Applied and Computational Mathematics and Statistics, University of Notre Dame, Notre Dame, IN, 46656, USA. {\tt jmantoot@nd.edu}}
\and Alan E. Lindsay\thanks{Department of Applied and Computational Mathematics and Statistics, University of Notre Dame, Notre Dame, IN, 46656, USA. {\tt a.lindsay@nd.edu}}
}

\maketitle

\begin{abstract}
Many transport processes in ecology, physics and biochemistry can be 
described by the average time to first find a site or exit a region, starting from 
an initial position. Typical mathematical treatments are based on  
formulations that allow for various diffusive forms and geometries but where only 
initial and final positions are taken into account. Here, we develop a 
general theory for the mean first passage time (MFPT) for velocity jump processes. 
For random walkers, both 
position and velocity are tracked and the resulting Fokker-Planck equation takes the form of a kinetic transport equation. Starting from the forward and backward 
formulations we derive a general elliptic integro-PDE for the MFPT of a random walker starting at a given location with a given velocity. We focus on two scenarios that are relevant to biological modelling; the diffusive case and the anisotropic case. For the anisotropic case we also perform a parabolic scaling, leading to a well known anisotropic MFPT equation. 
To illustrate the results we consider a two-dimensional circular domain under radial symmetry, where the MFPT equations can be solved explicitly. Furthermore, we consider the MFPT of a random walker in an ecological habitat that is perturbed by linear features, such as wolf movement in a forest habitat that is crossed by seismic lines.

\end{abstract}

\label{firstpage}

\begin{keywords}
Random walks, Mean First Passage Time, Anisotropic diffusion, Homogenization. 
\end{keywords}

\begin{AMS}
35M13, 92B99, 47G20, 60G40, 60G50.
\end{AMS}

\pagestyle{myheadings}
\markboth{T.~Hillen, M.R.~D'Orsogna, J.~C.~Mantooth, A.E.~Lindsay}{Mean first passage
times for transport equations}

\section{Introduction}
A key issue in the study of species movement is determining the time 
necessary for individuals to find a target, such as a food source, 
shelter, or mate, for the first time \cite{Venu2015,Stepien2020,PaHiTurtle}. In 
ecology, the time required for an animal to first reach (or exit from) a region 
can be recorded via satellite tracking and the data used to understand how the 
environment, road layout, seasonality, climate and the presence of other species affect 
habitat selection, foraging and migration patterns. This information may be useful in 
ecosystem management \cite{FAUCHALD2003, MCKENZIE2009, LECORRE2008, 
LECORRE2014}. Within cellular biology, identifying the time to first reach a 
specific site is also of primary interest, since first arrivals may trigger 
irreversible on-site biochemical transformations that indicate the onset of 
disease, the initiation of repair, or the completion of extracellular signaling 
\cite{Fok2009, BERNOFF2023, Lawley2023, Miles2020, ABNH2023}. Examples include the first time a
T-cell finds an antigen presenting cell during immune response \cite{Morgan2023}, the first time a fibroblast reaches a 
wound to initiate healing \cite{ambrosi2004cell}, the first time a base excision repair 
enzyme reaches a lesion on a DNA strand via charge transport 
\cite{Fok2008}. 
The first arrival time to a target is often a proxy for the 
efficiency of the search and a common goal is to expedite, 
hinder or otherwise control the search process itself \cite{ALLEN2010, CHOU2014,BL2018,LBW2017}. For 
example, in receptor-mediated viral entry a virus can enter a cell only after binding to a critical number of surface receptors as 
in the case of HIV, the SARS coronavirus, and hepatitis C 
\cite{GIBBONS2010, LAGACHE2017,BL2018}. The timing of gene expression also depends on the first 
time a specific set of intracellular proteins reach a threshold concentration 
\cite{RIJAL2020, GHUSINGA2017}. 
Related questions arise in chemistry, material and polymer science, 
and pertain to identifying the first assembly time of a cluster starting from individual 
subunits, determining how fast protein, filaments or other molecular aggregates 
reach a predetermined size, how fast bacteriophage and viral capsids are assembled 
\cite{WEISS1967, YVINEC2012, FARAN2023, CHAE2024,LEECH2022}.

\begin{figure}
    \centering
    \includegraphics[width= 0.5\textwidth]{./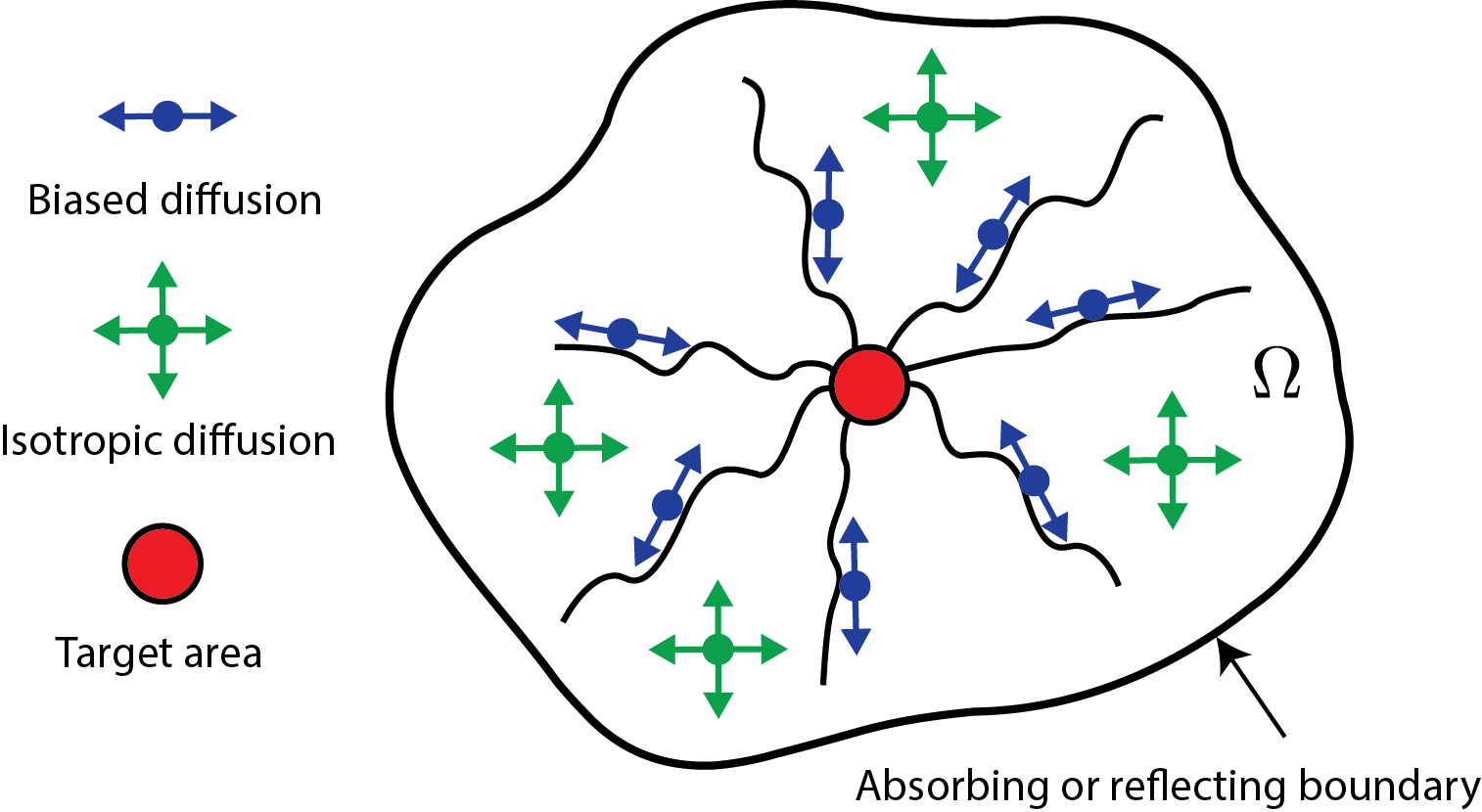}
    \caption{Schematic of anisotropic transport in a bounded planar region $\Omega\in\mathbb{R}^2$ in the presence of one-dimensional substructures. Diffusion is one-dimensional along the linear features and two-dimensional away from them. This schematic can represent, for example, organelle transport within a two-dimensional cell, grown on a flat surface, whereby particles transition from performing linear random walks along microtubules and/or other filaments emanating from the nucleus, to undergoing two-dimensional planar diffusion in the cytoplasm. Similarly, it can represent animal motion in a given environment with roads or other one-dimensional features that direct the animal's movement. Due to the stochastic nature of the transport process, each individual trajectory starting at a given position $x \in \Omega$ and with a given velocity $v$ will reach the target area, shown in red, at a specific time.  Our goal is to determine the mean first arrival time to the target. We can model this transport scenario using a kinetic transport equation with turning kernel given by the bimodal von-Mises distribution in a circular geometry and assuming that motion is biased along the radial direction.}
\end{figure}

First arrival times distributions can be quite broad, especially if outliers or rare 
events are possible. For some state space trajectories finding the target may occur 
quickly, for others the search may take longer, and in some cases it may never be 
completed, being halted by degradation or adsorption into trapped states.  A useful 
quantity is the {\it mean first passage time} 
(MFPT), the average of all first arrival times of the underlying stochastic process 
\cite{REDNER2001}. Note that since the MFPT is an average over all completion times, it 
is enough for one trajectory to never reach the target site or threshold for it to 
diverge. In transport phenomena the MFPT depends on dimensionality, 
geometry, the type of motion involved, heterogeneity of the environment; in 
threshold phenomena it depends on the reversibility of events that drive the process, 
cooperativity, and ordering effects \cite{DORSOGNA2005}. 
Multiple theories have been developed to evaluate the MFPT 
in a variety of spatio-temporal scales and geometries, mostly (but not all) in the 
context of diffusion type models \cite{ward2010a, Newby2013, Schuss2014, 
Lindsay2015,Venu2015, GREBENKOV2016, BressloffSchumm2022}. The most common forms of 
movement are Brownian motion, random walks, Levy flights, ballistic motion, 
and their combinations to represent run-and-tumble motility in bacteria for example, 
or to model alternating periods of movement and rest \cite{ANGELANI2014, PADASH2020, 
MUTOTHYA2021}.

For  however a MFPT theory is missing. These 
equations were introduced in biological modeling in the 1980s, 
and have become a powerful tool in the study of ecological and cell movement
\cite{alt80, HillenOthmer, hilmesenchymal, perthamebook, hillen2013transport, 
PainterButterflies, PaHiTurtle}. 
They are particularly useful when movement characteristics of single
individuals can be measured such as the velocities, turning rates and
directional preferences of migrating cells or animals. 
Recent advances in microscopy and communication 
systems have facilitated the merging of theoretical results with actual 
data, as it is now possible to observe molecular or cellular motion at high
resolution and to 
follow in detail the movement of animals over large distances or underwater. 
It is therefore of great interest to develop a MFPT theory for kinetic equations as first 
passage computations may serve as a bridge between theoretic estimates that depend on 
microscopic features of the transport process and the average timescales observed 
experimentally or in the field. 

Here, we are interested in animal or cellular species that orient their 
movement along directional features of the environment. We derive expressions 
for the mean first arrival time to a target as a function of the initial 
position and velocity. Broadly speaking, since directional motion is 
anisotropic, the mean first arrival time will depend on the microscopic 
features of the transport process, such as turning rates, initial speed and 
direction of movement. For the most general case we are not able to find a 
closed-form equation for the MFPT. However, for diffusive and anisotropic 
transport we are able to write integral equations of the first kind for the 
MFPT starting from an initial position and velocity. In special cases, for 
example for anisotropic, symmetric motion under the bimodal von-Mises distribution, this 
general expression
leads to a differential equation for the MFPT that can be solved analytically
and that can be readily applied to glioma movement \cite{PainterGlioma, 
Kumar2021,SwanHillen,hillen2013transport,engwer2014,Conte2023}, sea turtle 
orientation \cite{PaHiTurtle} and hill topping of butterflies \cite{PainterButterflies}.  
Our main results are contained in \eqref{generalMFPT} and \eqref{anisotropicMFPT}; the analytically solvable variant of \eqref{anisotropicMFPT} for the special case of the bimodal von-Mises distribution is shown in \eqref{anisotropicMFPT3}. 

The rest of this paper is organized as follows. In Section \ref{s:kinetic} we introduce kinetic transport equations for biological modelling, and we
categorize them as of {\it Boltzmann} type, {\it diffusive} type and {\it anisotropic} type. We consider the forward and backward formulations, define the survival probability, and write its dynamics using the backward equation. We thus derive the MFPT equation for the expected exit time $\Theta(x,v)$ of a random walker starting at $(x,v)$ for the diffusive type in Section \ref{s:diffusiveMFPT} and for the anisotropic case in Section \ref{s:anisotropicMFPT}. The main results are given in formulas \eqref{generalMFPT} and \eqref{anisotropicMFPT}, respectively. The parabolic scaling in Section \ref{s:para} allows us to consider the limit in cases of macroscopic spatio-temporal scales. In this scaling we derive an anisotropic MFPT for the 
mean exit time $\Theta(x,v) = T(x)$ which no longer depends on the initial velocity as shown in \eqref{anisotropicMFPT3}. In Section \ref{s:applications} we apply our results to various cases. For anisotropic movement in a radially symmetric circular domain we evaluate the MFPT to reach given boundaries in the disk and in the annulus. A second application considers oriented movement in a habitat that is criss-crossed by linear features. This scenario has been used to explain wolf movement in presence of seismic lines in a forest landscape in Northern Canada 
by \cite{MCKENZIE2009}. Here we show how the analysis carried out by \cite{MCKENZIE2009}
fits within our more general framework. We close with a conclusion Section \ref{s:conclusion} in which we relate our results to existing methods and open the door to interesting forthcoming problems. 

\section{Transport Equations}\label{s:kinetic}
%
\hfill Transport equations describe the time evolution of a particle density \\ $p(x,v,t)$ at time $t\geq 0$, location $x\in\Omega\subset\RR^n$ and velocity $v\in V\subset \RR^n$. 
For simplicity, we 
take 
 $V=[\sigma_1, \sigma_2]\times \Sn$, where the speeds $\sigma_1, \sigma_2$ obey 
$0 \leq \sigma_1 \leq \sigma_2 < \infty$.  
The time evolution of $p(x,v,t)$ is described by the {\em {forward transport equation}}
\begin{equation}\label{kinetic1}
p_t(x,v,t) + v\cdot\nabla_x p(x,v,t) = {\cal L} p(x,v,t)\,,
\end{equation}
where the subscript $t$ denotes the partial time derivative, 
the subscript $x$ denotes the spatial derivative, 
and ${\cal L}$ 
\al{is the {\it turning integral operator}}
that describes the specific directional changes of the particles. A typical form for ${\cal L}$ is 
\begin{equation}\label{integral1}
{\cal L} \varphi(x,v,t) = -\mu(x) \varphi(x,v,t) +\mu(x) \int_V K(x,v,v') \varphi(x,v',t) dv' \, .
\end{equation}
The first term on the right hand side of \eqref{integral1} is 
the rate at which particles at position $x$ change their 
velocity from $v$ to any other velocity $v' \in V$; the second 
term is the rate at which particles at location $x$ switch into 
velocity $v$ from any other velocity $v'$,
The quantity $\mu(x) $ is the {\em {turning
rate}} and its inverse is typically identified as the {\em 
{mean run time}} at position $x$. 
Although general formulations that include spatially dependent 
turning rates $\mu(x)$ arise in many applications \cite{BressloffLawley_2017,BressloffLawley_2017b,BressloffLawley_2017c,Wu2018,BressloffLawleyMurphy_2019}, for simplicity we assume henceforth that
$\mu(x) = \mu$ is spatially uniform.
The {\em {turning kernel}}  $K(x,v,v')$ denotes the probability density of switching velocity from $v'$ to $v$, given that a turn occurs at location $x$. The properties of $K(x,v,v')$ are key to the analysis presented in this work.
The minimal assumptions on $K$ are 
\begin{equation}\label{assumptionsT}
K(x,v,v')\geq 0, \qquad K(x,\cdot ,\cdot )\in L^2(V\times V),
 \qquad \int_VK(x,v,v') dv = 1.
\end{equation}
The second condition implies that the integral operator with kernel $K$ is a compact Hilbert-Schmidt operator in $L^2(V)$ \cite{HillenFA}, and the last assumption ensures that during velocity changes no particle is lost. 
Among the various possible forms of $K(x,v,v')$ are those that allow to 
represent collisions between particles, periods of straight motion alternating with abrupt directional changes (run-and-tumble models)  and general jump velocity processes where a particle's velocity undergoes a series of discrete changes 
\cite{HillenOthmer, othmer-hillen-02, perthamebook, carrillo2010, chaplain2011}. 

Kinetic transport equations, such as (\ref{kinetic1}) have been used in many applications, from physics to biology to social sciences. 
They have a long history in the study of the
thermodynamics of gases which include the effects of collisions between particles \cite{CIP}. In general
formulations of the Boltzmann equation the turning process is described by a non-linear interaction kernel and the set of velocities $V$ 
is unbounded. A linear kernel such as \eqref{integral1} arises as linearization of the collision kernel at a Maxwellian equilibrium distribution \cite{CIP}. An important difference with respect to biological applications is that the Boltzmann equation conserves mass, momentum and energy. {\textit{i.e.}} the kernel of the operator ${\cal L}$ is five dimensional. In biological applications there is only one conserved quantity, mass, and only in cases where there is no particle birth or death. \al{Our focus is on biological applications, and in this context it has proven useful to distinguish 
between the {\it isotropic diffusive} and the {\it anisotropic} sub-classes of transport equations.}

\al{The {\it isotropic diffusive} transport equation is characterized by a turning integral operator ${\cal L}$ whose null space consists of functions that are constant in velocity. This means that at equilibrium there are no preferred directions and that over 
long time-scales the dynamics becomes diffusion-like.  Full theories of the isotropic diffusive transport equations have been developed and applied
to biological processes such as chemotaxis \cite{HillenOthmer,othmer-hillen-02}. 
We derive the MFPT equation for isotropic diffusive transport in Section \ref{s:diffusiveMFPT}.}

\al{The {\it anisotropic} transport equation is typically employed to describe the dynamics of particles that have strong orientation guidance from the underlying environment. Mathematically this occurs via a non-trivial one-dimensional kernel of the turning integral operator ${\cal L}$. The MFPT equation for anisotropic transport is discussed in Section \ref{s:diffusiveMFPT}.}

More specifically, for anisotropic transport 
it is assumed that the turning kernel $K$ does not depend on the incoming
velocity $v'$, i.e. 
\begin{equation}\label{Tq}
K(x,v,v') = q(x,v),
\end{equation}
where $q\geq 0$. This assumption may seem restrictive, since many species would show some form of persistence in movement direction and it is entirely possible to keep the dependence on $v'$  in anisotropic movement as well. However, it has been shown in many applications that the simplifying assumption \eqref{Tq}  is extremely useful in the modelling process and it can be justified in many cases,
for example, when particles change direction using a well-defined underlying network structure.
The anisotropic framework was developed in \cite{hilmesenchymal} and extended in \cite{PainterECM}. To build $q(x,v)$
we assume that a distribution of preferred directions $\tilde q(x,\theta)$ is given for every spatial position $x$, where $\theta\in \Sn$ is a unit vector. We impose
\[ \tilde q (x,\theta) \geq 0, \qquad \mbox{and} \qquad  \int_{\Sn}\tilde q(x,\theta)d\theta =1 \] and assume
$q(x,v)$ is proportional to $ \tilde q(x,\hat v)$, where $\hat v =v/\|v\|$ denotes the corresponding unit vector. 
Since $\tilde q$ is a probability distribution on $\Sn$, and $q$ a probability distribution on the set of velocities $V$, we need to 
normalize appropriately leading to
\begin{equation}
\label{norm}
q(x,v) = \frac{\tilde q(x,\hat v)}{\omega},\quad \mbox{ with }\quad    
\omega \equiv \int_V \tilde q(x,\hat v) dv =  \left\{\begin{array}{ll} 
\frac{1}{n}(\sigma_2^n - \sigma_1^n) & \mbox{for } \sigma_1<\sigma_2;\\[4pt]
\sigma^{n-1} & \mbox{for } \sigma_1=\sigma_2=\sigma.\end{array}\right.
\end{equation}
%
%
We refer to the rescaled quantity $q(x,v)$ as the directional distribution of the underlying environment, 
although the true directional distribution is $\tilde q(x, \hat v)$. For this choice of turning kernel, equation \eqref{integral1} simplifies to
\[ {\cal L} \varphi(v) = \mu(x) (q(x,v)\hat \varphi - \varphi(v)), \qquad \mbox{with} \quad 
\hat \varphi \equiv \int_V \varphi(v') dv'.\]

\noindent
To summarize, the anisotropic transport equation has the form 
\begin{equation}\label{kinetic2}
p_t + v\cdot \nabla p = \mu(x) (q(x,v) \hat p - p) \qquad \mbox{with} \quad 
\hat p \equiv \int_V p(t, x, v') dv'.
\end{equation}

For the following analysis it is useful to consider two statistical quantities, 
the expectation $\IE_q(x)$ and the variance-covariance matrix $\IV_q(x)$
of $q(x,v)$ on $V$: 
\begin{equation}
\label{EV}
 \IE_q(x) = \int_V v q(x,v)dv,\qquad \IV_q(x) =\int_V(v-\IE_q(x))( v-\IE_q(x) )^T q(x,v) dv.
 \end{equation}
We now discuss a few cases of directional distributions $q(x,v)$ by assuming constant particle speed $\sigma$, {\it i.e.} $V= \sigma \Sn$. The extension to $V=[\sigma_1,\sigma_2]\times \Sn$ is straightforward, but it introduces some extra parameters 
that need to be carried through the integrals. The different cases are  \\

\noindent {\bf \al{Uniform distribution:}} The simplest case arises if there is no directional bias at all. This, of course, is also the most fundamental type of motion in diffusive transport. We write
\[ \tilde q(x,\theta) = \frac{1}{|\Sn|}, \qquad q(x,v) = \frac{1}{\omega|\Sn|}. \]
\noindent
leading to
\[ \IE_q(x) = 0, \qquad \IV_q(x) =\frac{\sigma^2}{\omega n}{\mathbb{I}},\]
where $\mathbb{I}$ denotes the identity matrix. 
\al{Note that this case falls under the isotropic diffusive case as well.} \\

 \noindent {\bf \al{Strict alignment:}} This case is most useful when the underlying medium favors motion along a given
direction $\gamma(x)\in\Sn$, as for example along a fiber, tissue, microtubule or track. One can write
\begin{equation*}
\tilde q(x,\theta) = \delta(\gamma(x)-\theta), \qquad q(x,v) = \frac{1}{\omega}\delta(\gamma(x)-\hat v), 
\end{equation*}
so that 
\[ \IE_q(x) = \sigma \gamma(x),\qquad \IV_q(x) = 0.\]
\noindent{\bf \al{Von-Mises distribution:}} The von-Mises distribution is the analog of the normal distribution on the unit sphere $\Sn$. Strictly speaking, it is called von-Mises distribution only on $\Sone$. In higher dimensions it is called the Fisher distribution. In two-dimensions the von-Mises distribution is \cite{vonMises} 
\begin{equation}\label{vonMises}
\tilde q(x, \theta) = \frac{1}{2\pi I_0(k(x)) } e^{k(x) \gamma(x) \cdot \theta},
\end{equation}
where $k(x)\geq 0$ is a measure of concentration of the distribution, 
$\gamma(x)\in\Sone$ is a given preferred direction, and $I_0(k(x))$ is the modified Bessel function of first kind of order $0$. 
As $k(x)\to 0$ the von-Mises distribution becomes uniform (as in 2.3.1)
whereas for $k(x)\to\infty$ it becomes singular 
(as in 2.3.2).
The expectation and variance of the two-dimensional von-Mises distribution are \cite{vonMises}
\bsub
\begin{eqnarray}
 \IE_q(x)  &=& \sigma \frac {I_1(k(x))}{I_0(k(x))} \gamma(x),\label{vm2drift} \\
  \IV_q(x) &=& \sigma^2\left[\frac{1}{2}\left(1 -\frac{I_2(k(x))}{I_0(k(x))} \right) \mathbb{I}_2
+ \left(\frac{I_2(k(x))}{I_0(k(x))} 
-\left(\frac{I_1(k(x))}{I_0(k(x))}\right)^2 \right) \gamma(x) \gamma(x)^T\right] \,,\label{vm2diffusion} 
\end{eqnarray}
\esub
where $I_0, I_1, I_2$ denote the modified Bessel functions of first kind of order 0,1 and 2, respectively. \\

\noindent{\bf \al{Bimodal von-Mises distribution:}} In many cases $q$ is symmetric, for example
when guided motion along a two-dimensional fiber or a track has the same probability along each direction. In this case
$q(x,-v) = q(x, v)$ and the bimodal von-Mises distribution can be written as the average of two single direction terms
\begin{equation}\label{bimodalvonMises}
\tilde q(x, \theta) = \frac{1}{4\pi I_0(k(x)) } \Bigl(e^{k(x) \gamma(x) \cdot \theta} + e^{-k(x) \gamma(x) \cdot \theta}\Bigr),
\end{equation}
with expectation and variance
 \bsub\label{difftensor}
 \begin{eqnarray}
\label{difftensor_a} \IE_q (x) &=& 0\,, \\
 \label{difftensor_b}\IV_q (x) &=&\sigma^2\left[\frac{1}{2}\left(1 -\frac{I_2(k(x))}{I_0(k(x))} \right) \mathbb{I}_2
 + \frac{I_2(k(x))}{I_0(k(x))} \gamma(x) \gamma(x)^T \right]\,.
 \end{eqnarray}
 \esub

Note that instead of using a dyadic product $\gamma\gamma^T$ some authors prefer to use a tensor product $\gamma\otimes\gamma$. 
We use the two interchangeably in this paper. 

\subsection{Backward Transport Equation}

The Kolmogorov backward transport equation is the foundation from which the mean first passage time equation is derived.  For clarity we distinguish initial location and velocity $(x,v)$ from terminal location and velocity $(\bar x, \bar v)$. 
Thus $p(\bar x,\bar v,t |x, v)$, 
denotes the probability density of a random walker starting at location $x\in \Omega\subset \RR^n$ with initial velocity $v\in V$ to be at location $\bar x$ with velocity $\bar v\in V$
at time $t\geq 0$. 
Let us rewrite the forward equation
\begin{equation}\label{tra-forward}
p_t + v\cdot \nabla_{\bar x} p =  -\mu\, p +\mu \int_V K(\bar x, \bar v,\bar v') p(\al{ \bar x}, \bar v', t |x, v)d\bar v',
\end{equation}
%
%
where the gradient subscript allows to distinguish between the spatial derivative with respect to the current location ($\nabla_{\bar x}$)
and the derivative with respect to the starting location ($\nabla$ without index). 
The model \eqref{tra-forward} can be seen as a forward equation of the stochastic process of a velocity jump process \cite{ODA}.  To derive the corresponding Kolmogorov backward equation, we consider the formal adjoint. We write \eqref{tra-forward}
as
\[ p_t (\bar x,\bar v,t | x, v) = {\cal A} p (\bar x,\bar v, t| x, v). \]
The operator $\cal A$  acts on functions $\varphi(\bar x, \bar v)$ of the variables $(\bar x,\bar v)$ in $\Omega\times V$ and is defined as
\[ {\cal A} \varphi (\bar x,\bar v ) = - \bar v\cdot \nabla_{\bar x} \varphi (\bar x,\bar v) 
 -\mu\, \varphi (\bar x,\bar v) +\mu \int_V K(\bar x, \bar v,\bar v') \varphi(\bar x,\bar v')d\bar v'.\]
\al{To proceed, we utilize the Chapman-Kolmogorov 
equation \cite{VanKampen2007} under the assumption of time translational invariance. 
Since we assumed that the turning kernel $K(x,v,v')$ and the turning rate $\mu$ are time independent, the assumption of time translational invariance is satisfied. We thus write}
\begin{equation}
p(\bar x,\bar v, t| x, v) = \int_\Omega \int_V p(\bar x,\bar v, \tau | x', v') p(x', v', t - \tau | x, v) dv' dx',
\end{equation}
for any $\tau \geq 0$. 
Taking the time derivative on both sides we find
\begin{eqnarray*}
p_t(\bar x,\bar v,t | x, v) &=& \int_\Omega \int_V p(\bar x,\bar v, \tau | x_1, v_1) {\cal A} p(x',v', t - \tau | x, v) dv' dx' \\
&=& \int_V \int_\Omega  {\cal A^*} p(\bar x, \bar v, \tau | x', v') 
p(x', v', t - \tau | x, v) dx' dv',
\end{eqnarray*}
where the operator $\cal A^*$ is the adjoint of $\cal A$ and acts on \al{functions of} the variables $(x', v')$ in $\Omega\times V$. 
Notice that if we let $\tau \to t$ then $\lim_{\tau \to t} p(x', v', t - \tau | x, v) =  p(x',v',0 | x, v) = \delta(x' - x) 
\delta (v' - v)$ leading to 
\begin{equation}\label{backward}
p_t  (\bar x,\bar v, t| x, v)= {\cal A^*} p (\bar x,\bar v, t | x, v),  
\end{equation}
where the adjoint $\cal A^*$ acts on \al{functions of} the $(x,v)$ variables in $\Omega\times V$. 
This is the Kolmogorov backward equation.  To find $\cal A^*$ explicitly, given $\cal A$ on $\Omega\times V$, we 
consider two functions $\phi(x,v), \psi(x,v)$ 
\al{that satisfy the Dirichlet boundary condition
\begin{equation}\label{eq:BC_Adjoint}
\phi(x,v) = \psi(x,v) = 0, \quad \mbox{on} \quad (x,v) \in \partial \Omega \times V. 
\end{equation}
Equation\,\eqref{eq:BC_Adjoint} implies that particles are absorbed 
on the boundary of $\Omega$. In principle, the exit boundary may be any positive measure subset of $\partial \Omega$, for example, one may want to 
study the exit from a given region of the boundary or stipulate that certain portions are reflecting. For simplicity, we assume the exit boundary coincides with $\partial \Omega$. In the next section, we will define the \lq\lq survival probability\rq\rq\ that a particle remains in $\Omega$ at time $t$ and we will use $\cal A^*$ to study its dynamics. As particles cannot survive at the exit boundary, the survival probability must be zero on $\partial \Omega$. Thus, since 
obeying Dirichlet boundary conditions is a requirement for the survival probability, we impose that $\cal A^*$ also acts on functions that obey \eqref{eq:BC_Adjoint}. In this way, the survival probability is naturally part of the functional domain of $\cal A^*$. We can now calculate}

%
\begin{eqnarray*}
    \int_\Omega\int_V \phi {\cal A} \psi \,dv\, dx 
    &=& 
    -\int_\Omega\int_V \phi(x,v) (v\cdot \nabla_x)\psi (x,v) \, dv\, dx \\
    && +\int_\Omega\int_V \phi(x,v) \left(-\mu \psi (x,v) +\mu \int_V K(x, v,v') \psi (x,v') dv' \right) dv\, dx\\
    &=& \int_V\int_\Omega \psi(x,v) (v\cdot\nabla_x) \phi(x,v) dx\, dv 
    -\mu \int_V\int_\Omega \phi(x,v) \psi(x,v) dx \, dv  \\
    &&+ \mu\int_V\int_\Omega \left(\int_V K(x, v,v') \phi(x,v) dv' \right) \psi(x,v') \, dx\,dv. 
\end{eqnarray*}
The last identity follows from 
integration by parts and from the Dirichlet boundary condition $\phi(x,v) =\psi(x,v) = 0$ for $x \in \partial \Omega$.
If we now denote the adjoint integral kernel as 
\[ K^*(x, v,v') = K(x, v',v),\]
then $\cal A ^*$, the adjoint operator of $\cal A$, is given by
\[ {\cal A}^*\phi(x,v)  =( v\cdot\nabla) \phi (x,v) -\mu \phi (x,v) + \mu \int_V K^*(x, v,v') \phi(x,v') dv'. \]
The Kolmogorov backward equation in explicit form becomes 
\begin{equation}\label{tra-backward}
p_t - v \cdot \nabla p  = 
 - \mu \,p +\mu \int_V K^*(x, v, v') p(\bar x,\bar v,t |x, v' )dv', 
\end{equation}
This equation was previously derived by Stroock \cite{stroock}.
We can write \eqref{tra-backward}
to mirror \eqref{kinetic1}
by introducing the turning operator ${\cal L}^*$
\begin{equation}\label{kinetic2b}
p_t (t,x,v)- v \cdot \nabla p (t,x,v) = {\cal L}^* p (t,x,v),
\end{equation}
where ${\cal L}^* = {\cal A}^* - (v \cdot \nabla) $ is given by
\begin{equation}
\label{integral2b}
{\cal L}^* \varphi(t,x,v) = -\mu \varphi(t,x,v) +\mu \int_V K^*(x,v,v') \varphi(t,x,v') dv'. \,
\end{equation}
It is straightforward to show that ${\cal L}^*$ is the formal adjoint of
${\cal L}$ defined in \eqref{integral1}. 

\subsection{The Survival Probability}
\label{survprob}
To derive an expression for the mean first passage time of the transport process we first introduce
the survival probability $s(x,v,t)$. This quantity is defined as the probability that having started from an initial
position $x \in \Omega$ with velocity $v \in V$
the final position $\bar x$ and velocity $\bar v$ at time $t$
remain in the same domain $\Omega \times V$. 
We thus assume $p(\bar x, \bar v, t=0| x,v) = \delta (\bar x - x) \delta (\bar v - v)$
with $x \in \Omega$ and $v \in V$ and define
\begin{equation}\label{survive}
s(x, v, t ) = \int_{V} \int_{\Omega}  p(\bar x, \bar v, t| x,v) \, d \bar x\, d \bar v \qquad \forall (x,v) \in\Omega \times V.  
\end{equation}
In principle the domain for the forward and backward equations can be different,
as one can study the exit time from a subset of the domain on which the forward equation
is valid. Here, for simplicity we assume they are the same and utilize absorbing boundary conditions
on $\Omega$. Thus, we set $s(x,v,t) = 0$ for any $x \in \partial \Omega$ 
to indicate that a random walker starting at the 
boundary of $\Omega$ does not survive at any time. Furthermore, the initial condition $p(\bar x, \bar v, t=0 | x, v) = \delta(\bar x - x) 
\delta (\bar v - v)$ implies that $s(x,v,t=0) = 1$ for all $x \in \Omega$.  
We also define the probability flux $j(x,v,t)$ as
\begin{equation}\label{j}
j(x, v, t) = \int_{V} \int_{\Omega} v\, p(\bar x, \bar v,t |x,v) \, d\bar x  \, d \bar v = v s(x,v,t). 
\end{equation}
We now integrate the backward equation \eqref{tra-backward} over the final position $\bar x \in \Omega$ and 
$\bar v \in V$ to find a transport equation for $s(x,v,t)$, 
\begin{eqnarray}
\label{survive_dyn}
s_t - v \cdot \nabla s = - \mu s + \mu \int_V K^*(x,v,v') s(x,v',t) d v'. 
\end{eqnarray}
Given the above definitions of $s(x,v,t)$ and $j(x,v,t)$ we note that one is most often concerned
with the survival probability, and the mean first passage time, of reaching the boundary 
$\partial \Omega$ regardless of the velocity.  Hence we introduce the marginal
survival probability $S(x,t)$ defined as 
\begin{equation}\label{survive2}
S(x, t ) = 
\frac 1 {|V |} \int_{V}
 s(x,v, t) \, d v\, ,
\end{equation}
where $|V| = \int_{V} d v$. The corresponding flux is
\begin{equation}\label{flux2}
J(x, t ) = 
\frac 1 {|V |} \int_{V}
 j(x,v, t) \, d v.
\end{equation}
Note that $S(x,t)$ remains a probability and $S(x,t=0)=1$. The flux $J(x,t)$ still carries units of a velocity. For later use we collect the initial conditions here:
\begin{equation}\label{IC}
 p(\bar x, \bar v, 0 | x, v) = \delta(\bar x - x) \delta (\bar v - v), \qquad s(x,v,0) = 1, \qquad S(x,0)=1.
 \end{equation}
To study the dynamics of $S$ and $J$ we must specify the properties of the turning kernel $K(x,v,v')$ and of its adjoint $K^*(x,v,v')$; we do so in the two cases below for diffusive and anisotropic transport. 

{\al{
\subsubsection{The mean first passage time}}}
{\al{
The quantity of interest in our analysis is $\Theta(x,v)$,
the expected survival time before leaving $\Omega$ 
given the random walker started at position $x$ with velocity $v$. Thus, through $\Theta(x,v)$ we keep the initial velocity explicit in the definition of the MFPT. As $s(x,v,t)$ denotes the survival probability, then $1-s(x,v,t)$ denotes the probability of exit, which has the density $-s_t(x,v,t)$. We calculate via integration by parts that
\begin{equation}
\label{Thetaxv}
 \Theta(x,v) := \ \al{-} \int_0^\infty t s_t(x,v,t) dt = \int_0^{\infty} s(x,v,t) dt.
\end{equation}}

\al{We can directly integrate the backward equation (\ref{survive_dyn}) over time and impose that particles start at position $x$ with velocity $v$, and that none of them survive forever, 
\[ s(x,v,0) =1, \qquad \lim_{t\to \infty} s(x,v,t) = 0, \]
to obtain an integro-differential equation for the expected survival time $\Theta(x,v)$. We find
\begin{equation}\label{Thetakinetic}
-1 = v\cdot \nabla \Theta(x,v) -\mu \Theta (x,v) +\mu \int_V K^*(x,v,v') \Theta(x,v') dv' = {\cal A^*} \Theta(x,v)
\end{equation}
This equation will become useful later. Here, we continue our analysis of the survival probability $S(x,t)$ and its flux $J(x,t)$ for several special cases.}}

{\al{
\subsubsection{Higher moments}
Here we give a brief description of how to derive recursive expressions for the moments $\Theta_m(x,v)$ of the survival probability $s(x,v,t)$ for $m \geq 1$. For $m \geq 0$, the $m$-moment is defined as
\begin{equation}
\Theta_m(x,v) := - \int_0^{\infty} t^m s_t(x,v,t) dt = m \int_0^{\infty} t^{m-1} s(x,v,t) dt, 
\end{equation}
where the last equality was obtained using integration by parts and the assumption that $s(x,v,t)$ decays fast enough at infinity
\[\lim_{t\to \infty} t^{m-1} s(x,v,t) = 0. \]
Then $\Theta_0(x,v) = 1$ denotes the total probability and the first moment $\Theta_1(x,v) = \Theta(x,v)$ is the mean first passage time as defined in \eqref{Thetaxv}. To find the higher moments for $m \geq 2$, we follow a process similar to that leading to \eqref{Thetakinetic}. First, we multiply the backward equation (\ref{survive_dyn}) by $t^{m-1}$ and then integrate
over $t\in(0,\infty)$. Using the same initial conditions as above, 
we find that
\begin{equation}
\label{Thetakinetic_moment}
- m \, \Theta_{m-1}(x,v) = v\cdot \nabla \Theta_m(x,v) -\mu \Theta_m (x,v) +\mu \int_V K^*(x,v,v') \Theta_m(x,v') dv' = {\cal A^*}\Theta_{m}(x,v).
\end{equation}
Upon setting $m=1$ in \eqref{Thetakinetic_moment} we recover the expression for the expected survival time $\Theta(x,v)$ shown in \eqref{Thetakinetic}. 
}}

\section{MFPT for \al{Isotropic} Diffusive Transport}\label{s:diffusiveMFPT} 
We now write the equation for the mean first passage time under the isotropic diffusive transport
regime as discussed above. In addition to \eqref{assumptionsT} we assume symmetry and particle conservation of the turning kernel $K$
\begin{equation}\label{diffusiveK}
K(x,v,v')=K(x,v,-v'), \qquad \mbox{and}\qquad \int_V K(x,v,v') dv' =1.
\end{equation}
For the adjoint operator this implies 
\begin{equation}\label{assumptionT2} 
K^*(x,v,v')=K^*(x,-v,v'), \qquad \mbox{and}\qquad \int_V K^*(x,v,v') dv =1.
\end{equation}
We use assumption \eqref{assumptionT2} and integrate equation \eqref{survive_dyn} for the survival probability $s(x,v,t)$ over velocity space to obtain a conservation law
\begin{eqnarray}\label{cons1}
S_t - \nabla \cdot J = 0. 
\end{eqnarray}
We also multiply the backward equation \eqref{survive_dyn} by $v$,  integrate over velocity space, and divide by the volume $|V|$ to obtain
\begin{eqnarray}\label{cons2}
J_t - \frac 1 {|V|} \int_V v(v\cdot \nabla) s \, dv = -\mu J +\frac{\mu}{|V|}\int_V\int_V v\, K^*(x,v,v') s(x,v',t) \,dv'\, dv .
\end{eqnarray}
Since assumption \eqref{assumptionT2} on the symmetry of $K$ implies that
\[ \int_V vK^*(x,v,v') dv =0, \]
the last integral vanishes. Finally, using tensor notation for a dyadic product of two vectors we  write
\begin{equation}\label{fluxlaw1}
J_t - \frac{1}{|V|}\nabla \cdot \int_V v\otimes v \,s(x,v,t) \, dv = -\mu J.
\end{equation}
Equations \eqref{cons1} and \eqref{fluxlaw1} do not 
 not lead to a closed system of equations for $S$ and $J$. We 
will discuss the closure problem  in more detail later. 
For now, we identify the complement of $S(x,t)$, the function $F(x,t) = 1-S(x,t)$, as the exit probability from 
$\Omega$. Its density is given by
\[ f(x,t) =
F_t(x,t) = -S_t(x,t).\]
The expected survival time  $T(x)$ before exiting $\Omega$ given the random walker started
at the initial position $x \in \Omega$ can be now al{computed through integration by parts as}
\[ T(x) \equiv \int_0^\infty t f(x,t) dt = \int_0^\infty S(x,t) dt. \]
We refer to $T(x)$ as the mean first passage time.
Integrating \eqref{cons1} for the dynamics of $S$, and \eqref{cons2} for the dynamics
of $J$, both over time, and using the initial conditions \eqref{IC}, we obtain the following  
\bsub
\begin{eqnarray}\label{NJsystem2}
\label{NJsystem_a2} -1 &=& \nabla \cdot \int J dt   \\
\label{NJsystem_b2} 0 &=& \frac 1  {|V|} \nabla \cdot \int_V \int_0 ^{\infty}v\otimes v\, s(x,v,t) \, dt \, dv - \mu \int J dt. 
\end{eqnarray}
\esub
\al{In the above calculation, we have assumed that $p(\bar x, \bar v, t \to \infty | x,v) = 0$ since all particles eventually exit the domain and are ``captured" by the boundary due to the Dirichlet boundary condition}. In addition, we have applied that $\int_V v dv = 0$ due to the symmetry of the $V$ domain. We can now take the divergence of \eqref{NJsystem_b2} and substitute the resulting expression in 
\eqref{NJsystem_a2} to obtain
\begin{equation}\label{diffusiveMFPT}
-1 = \frac{1}{\mu |V|} \nabla \otimes \nabla: \int_V \int_0^\infty  v\otimes v \,s(x,v,t) \, dt \,dv.
\end{equation}
Here, we have used the colon notation $:$ to denote the tensor convolution of two vectors $a$ and $b$ as 
\[ a\otimes a : b\otimes b = \sum_{i,j=1}^n a_i a_j b_i b_j .\]
We like to obtain an equation for the exit time $\Theta(s,v)$, defined in (\ref{Thetaxv}). 
Integrating \eqref{Thetaxv} over $V$ we obtain 
\[ T(x) = \frac 1 {|V|} \int_V \Theta(x,v) dv.\]
Using equation \eqref{diffusiveMFPT} we find  the MFPT equation for the case of \al{isotropic} diffusive transport
\begin{equation}\label{generalMFPT} 
-1 = \frac{1}{\mu |V|} \nabla\otimes\nabla: \int_V v\otimes v\, \Theta(x,v) dv. 
\end{equation}

\subsection{Example:} 
We assume constant speed so that $V=\sigma {\mathbb S}^{n-1}$
and rotational symmetry in the initial velocity dependence of $s(x,v,t) =s(x,t) $ so that 
\[ \int_{V} v\otimes v\, s(x,v, t) dv = \frac{\sigma^2}{n} {\mathbb I} \int_V s(x,t) dv , \]
where $\mathbb I$ denotes the identity matrix. Substituting this expression
in \eqref{generalMFPT} \al{and noting that $\nabla \otimes \nabla : \mathbb I = \Delta$}, we find 
\begin{equation} 
\label{ndimdiff}
-1 = \frac{\sigma^2}{n\mu} \Delta T(x).
\end{equation}
This equation has the well known form of a MFPT for a diffusion process in an $n$-dimensional system 
with diffusion constant $D_n=\frac{\sigma^2}{n\mu}$. 
This coefficient also agrees with the parabolic limit of the transport equation, as carried out in several papers, for example \cite{HillenOthmer}. 

\section{MFPT for Anisotropic Transport}\label{s:anisotropicMFPT} 
We now derive the equation for the mean first passage time under anisotropic transport, where directional cues from the environment
bias particle movement along one or more select directions. For simplicity, in addition to \eqref{assumptionsT} we also
assume that $K(x,v,v')  = q(x,v)$ is independent of the incoming velocity $v'$ and that $q(x,v)$ is symmetric in $v$. 
We summarize the properties of $K(x,v,v')$ and $q(x,v)$ below 
\bsub\label{T3assumpt}
\begin{gather}\label{assumptionT3} 
K(x,v,v') = q(x,v), \qquad \mbox{i.e.} \qquad K^*(x,v,v') = q(x,v'),\\[4pt]
\label{assumptionq} 
q\geq 0,\qquad  q\in L^2(V), \qquad \int_V q(x,v) dv =1, \qquad q(x,-v) = q(x,v). 
\end{gather}
\esub
This choice of $K(x,v,v')$ does not satisfy the conditions listed in 
\eqref{assumptionT2}, since 
\[ \int_V K^*(x,v,v') dv = q(x,v') |V|, 
\] 
and the right hand side equals $1$ if and only if $q(x,v')=\frac{1}{|V|}$. 
Hence, the results shown in 
Section \ref{s:diffusiveMFPT} for diffusive transport apply only to the simple
case of a uniform turning kernel but are not valid for a general, non uniform $q(x,v)$. 
To find an expression for the MFPT under anisotropic transport
we start with the backward equation \eqref{survive_dyn} for the survival probability $s(x,v,t)$ adapting it to the case at hand and write
\begin{eqnarray}
\label{survive_dyn2}
s_t - v \cdot \nabla s = - \mu s + \mu \int_V q(x,v') s(x,v',t) d v'. 
\end{eqnarray}
We now multiply equation \eqref{survive_dyn2} by $q(x,v)$ and integrate over 
the velocity domain to obtain
\begin{eqnarray*}
 \int_V q(x,v) s_t(x,v,t) dv - \int_V v q(x,v) \cdot \nabla s(x,v,t) dv && \\
&& \hspace{-5cm} = -\mu \int_V q(x,v) s(x,v,t) dv +\mu \int_V\int_V q(x,v) q(x,v') s(x,v',t) dv' dv =0. 
\end{eqnarray*}
The last identity is derived from the property that $q(x,v)$ integrates to unity in velocity space. To simplify the notation we introduce the following two quantities
\[ n(x,t) = \int_V q(x,v) s(x,v,t) dv \quad \mbox{and} \quad w(x,t) = \int_V v q(x,v)\cdot\nabla s(x,v,t) dv, \]
which leads to
\begin{equation}\label{cons2a}
n_t - w =0.
\end{equation}
We now derive a differential equation for $w$, including dependencies only where explicitly needed 
\begin{eqnarray*}
    w_t &=& \int_V v q \cdot \nabla s_t\, dv
     = \int_V v q \cdot\nabla \left(v\cdot\nabla s -\mu s +\mu \int_V q(x,v') s(x, v',t) dv' \right)dv\\
   &=& \int_V q\,(v\otimes v:\nabla\otimes\nabla) \, s \, dv  
- \mu\int_V v q \cdot \nabla s \, dv + \mu\int_V v q(x,v) dv \cdot \nabla \int_V q(x,v') s(x,v',t)\, dv'.
\end{eqnarray*}
Since $\int_V v q(x,v) dv =0$ by the symmetry assumption in \eqref{assumptionq}, the last term vanishes yielding
\begin{equation}\label{weqa} 
w_t = \int_V q(x,v) (v\otimes v : \nabla\otimes \nabla) s(x,v,t) dv - \mu w.
\end{equation} 
We now integrate \eqref{cons2a} and \eqref{weqa} over time and obtain boundary terms. Since no particle survives forever, {\textit{i.e.}} $s(x,v,t\to\infty)=0$, we can write $n(x,\infty) = w(x,\infty) =0.$ Moreover, the initial condition $s(x,v,0)=1$ yields
$n(x,0) = 1$ and $w(x,0) =0.$ Using these identities we find
\begin{eqnarray*}
    -1 &=& \int_0^\infty w(x,t) dt \\
    0 &=& \int_V \int_0^\infty q(x,v) (v\otimes v:\nabla\otimes\nabla) s(x,v,t) dt \, dv - \mu\int_0^\infty w(x,t) dt, 
\end{eqnarray*}
which combine to yield
\[ -1 = \frac{1}{\mu} \int_V \int_0^\infty q(x,v) (v\otimes v : \nabla\otimes\nabla) s(x,v,t) dv\, dt.\]
We can rewrite this expression by invoking $\Theta(x,v)$, the MFPT where both the initial condition and velocity are specified, to finally obtain \al{the MFPT equation for anisotropic transport:}
\begin{equation}\label{anisotropicMFPT}
-1 = \frac{1}{\mu} \int_V q(x,v) (v\otimes v :\nabla\otimes\nabla) \Theta(x,v) dv.
\end{equation}

\al{
There is an alternative way to derive the above equation (\ref{anisotropicMFPT}) directly from (\ref{Thetakinetic}). The calculations are essentially the same as performed above, but they are carried out in a different order. Still, we find it useful to sketch this derivation here to keep record of the intermediate steps:}

\al{We still set $K(x,v,v') = q(x,v)$ and assume (\ref{assumptionT3}) and (\ref{assumptionq}) to obtain from (\ref{Thetakinetic}) that  
    \begin{equation}\label{help1}
        -1 = v\cdot\nabla \Theta(x,v) - \mu \Theta(x,v) +\mu \int_V q(x,v') \Theta(x,v') dv'. 
    \end{equation}
We multiply (\ref{help1}) by $q(x,v)$ and integrate over $V$ to find 
    \begin{equation}\label{help2}
        -1 = \int_V q(x,v) v\cdot\nabla \Theta(x,v)\; dv.  
    \end{equation}
Then we take the spatial gradient of (\ref{help1}) to get 
    \begin{equation}\label{help3}
    0 = v\cdot \nabla\otimes\nabla \Theta(x,v) -\mu \nabla\Theta(x,v) +\mu \nabla \int_V q(x,v') \Theta(x,v') dv'. 
    \end{equation}
Multiplying (\ref{help3}) by $v q(x,v)$ and integration over $V$ yields:
    \begin{equation}\label{help4} 
        \int_V q(x,v) v\cdot \nabla \Theta(x,v) = \frac{1}{\mu} \int_V q(x,v) \left( v\otimes v:\nabla\otimes\nabla\right) \Theta(x,v) dv,
    \end{equation}
    where we used the symmetry of $q$ such that $\int v q dv =0$. 
Finally, we use (\ref{help4}) in (\ref{help2}) to obtain the main result (\ref{anisotropicMFPT}).}

\medskip

Equation \eqref{anisotropicMFPT} is an interesting anisotropic integro-differential equation for the mean first exit time $\Theta(x,v)$ when starting at $(x,v)$. In this general setting, we cannot derive a simple equation for the mean exit time $T(x)$, since the initial velocity cannot be neglected. Also, we cannot simply integrate \eqref{anisotropicMFPT} over velocity space, since the velocity and derivative terms are mixed. This issue is similar to a moment closure problem, which is well known and well studied in the theory of transport equations \cite{cercignani1988boltzmann,hillen20052}. In order to proceed we must find an approximation for the $q(x,v)$-\al{weighted} second moment of $\Theta(x,v)$. One way to accomplish this is to consider parabolic scaling, which leads to an equation for $T(x)$ alone. We do this in the next section. 

\section{Parabolic Scaling for Anisotropic Transport}\label{s:para}
Parabolic scaling is a well known technique to study transport processes on macroscopic time and space scales. While the scaling of the forward equation is standard, the corresponding scaling for the backward equation is a new result to the best of the authors' knowledge. Here, for consistency, we present both cases, forward and backward.
For simplicity we assume that $K(x,v,v') = q(x,v) = q(x,-v)$ is a symmetric, anisotropic directional distribution that does not depend
on the incoming velocity $v'$ and whose properties are listed
in \eqref{assumptionq}.  We now collect some functional analytical properties of $K(x,v,v') =q(x,v)$.  First, 
for the forward transport equation in \eqref{kinetic1} we write the turning operator $\cal L$ defined via the integral representation 
\eqref{integral1} as a mapping on $L^2(V)$ so that for a test function $\varphi(x,v)$ 
\[ \mathcal{L} \varphi(x,v) = -\mu \varphi (x,v) + \mu q(x,v) \hat{\varphi} (x), \qquad  \hat{\varphi}(x) = \int_V\varphi(x,v') dv'.\]
Similarly, for the backward Kolmogorov equation in \eqref{kinetic2b} the turning operator ${\cal L}^*$ can be written 
as
\[ \mathcal{L}^*\varphi(x,v) = -\mu \varphi(x,v) +\mu \int_V q(x,v') \varphi(x,v') dv'.\]
To perform the parabolic scaling  we need to find the null spaces (kernel) and the pseudo-inverses of the ${\cal L, L^*}$ operators on the complement of 
their null spaces. We collect these properties in the following Lemmas.

\begin{lemma} 
\label{l:kernels}
The kernels of the operators $\mathcal{L}$ and $\mathcal{L}^*$ are the one-dimensional
spaces spanned by the functions $q(x,v)$ and 1, respectively. 
\[ \mbox{ker} \, \mathcal{L} =\langle q(x,v) \rangle, \qquad \mbox{ker} \, \mathcal{L}^* = \langle 1 \rangle.\]
\end{lemma}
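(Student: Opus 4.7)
The plan is to characterize $\ker\mathcal{L}$ and $\ker\mathcal{L}^*$ directly from their explicit integral representations, treating $x\in\Omega$ as a parameter so that both operators act on $L^2(V)$ in the velocity variable. The normalization $\int_V q(x,v')\,dv' = 1$ from \eqref{assumptionq} does essentially all the work in both cases.

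For $\mathcal{L}$, I would set $\mathcal{L}\varphi(x,v)=0$. Writing out the formula immediately gives $\varphi(x,v) = q(x,v)\hat\varphi(x)$, where $\hat\varphi(x):=\int_V\varphi(x,v')\,dv'$ depends only on $x$. Hence every null element must, for each fixed $x$, lie on the line spanned by $v\mapsto q(x,v)$. Conversely, substituting $\varphi(x,v)=c(x)q(x,v)$ and using $\hat\varphi(x)=c(x)\int_V q(x,v')\,dv' = c(x)$ yields $\mathcal{L}\varphi = -\mu c(x)q(x,v) + \mu c(x)q(x,v) = 0$. This establishes $\ker\mathcal{L} = \langle q(x,v)\rangle$.

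For $\mathcal{L}^*$, the equation $\mathcal{L}^*\varphi=0$ produces
\[
\varphi(x,v) = \int_V q(x,v')\varphi(x,v')\,dv'.
\]
The right-hand side is independent of $v$, so $\varphi(x,v)=c(x)$ for some function $c$. Conversely, for $\varphi$ independent of $v$ the normalization of $q$ gives $\mathcal{L}^*\varphi = -\mu c(x) + \mu c(x)\int_V q(x,v')\,dv' = 0$. Therefore $\ker\mathcal{L}^* = \langle 1\rangle$.

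The argument is purely algebraic and I do not anticipate a genuine obstacle. The only bookkeeping point worth flagging is the interpretation of \emph{one-dimensional}: for each fixed $x$ the kernel inside $L^2(V)$ is a bona fide one-dimensional subspace, while the multiplicative factor ($\hat\varphi(x)$ for $\mathcal{L}$ or $c(x)$ for $\mathcal{L}^*$) is an $x$-dependent coefficient parametrizing how that line varies across $\Omega$.
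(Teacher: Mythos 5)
Your proposal is correct and follows essentially the same route as the paper: read off from $\mathcal{L}\varphi=0$ that $\varphi$ is proportional to $q$, and from $\mathcal{L}^*\varphi=0$ that $\varphi$ is independent of $v$. You are slightly more careful than the paper in verifying the converse inclusions via the normalization $\int_V q(x,v')\,dv'=1$, which is a welcome addition but not a different argument.
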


\begin{proof}
To find the kernel of $\mathcal{L}$ we seek functions $\phi(x,v)$ that satisfy $\mathcal{L}\phi =0$. This condition is
\[-\mu \phi(x,v) +\mu q(x,v) \hat{\phi} (x) = 0,  \quad \hat{\phi}(x) \equiv \int_V \phi(x,v) dv.\]
To solve the above equations consistently, $\phi(x,v)$ must be proportional to $q(x,v)$. To find the kernel of the adjoint 
${\cal L}^*$  we must instead solve  $\mathcal{L}^*\phi =0$, or
\[ -\mu \phi(x,v) +\mu \int_V q(x,v') \phi(x,v') dv' =0, \]
\al{implying that $\phi(x,v)$ is uniform in $v$, \emph{i.e.} $\phi(x,v) = \phi(x)$}.
\end{proof}
\al{
Note that elements in $\langle 1 \rangle^\perp$ carry no mass since 
\[ 0= (\psi, 1) = \int_V \psi(v) dv \qquad \mbox{for all} \quad \psi\in \langle 1\rangle^\perp.\]}
\begin{lemma} 
\label{l:kernels2} 
For the ${\cal L}$ operator consider the inverse of the generating function 
$q(x,v)$ as a weight in $L^{2}(V)$ and restrict ${\cal L}$ 
to the domain of functions that are $L^2(V)$ when weighted by $q^{-1}$, 
which we denote by $L^2_{q^{-1}} (V)$.
A similar construct for ${\cal L}^*$ using the corresponding generating
function $1$ does not alter the domain of ${\cal L}^*$. Then, the
pseudo-inverse operators acting on 
\[ \mathcal{L}:L^2_{q^{-1}}(V) \mapsto L^2_{q^{-1}}(V), \qquad \mathcal{L}^*: L^2(V) \mapsto L^2(V)\]
are, respectively,  the multiplication operators
\[ \left(\mathcal{L}|_{\langle q\rangle^\perp}\right)^{-1} = -\frac{1}{\mu}, \qquad \left(\mathcal{L}^*|_{\langle 1\rangle^\perp}\right)^{-1} = -\frac{1}{\mu}.\]
\end{lemma}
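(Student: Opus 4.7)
The plan is to verify the lemma by direct substitution, exploiting the fact that on the orthogonal complement of each kernel, the nonlocal integral part of the corresponding operator vanishes, leaving only the local multiplication $-\mu I$.

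First I would characterize the orthogonal complements explicitly in the appropriate weighted inner products. For $\mathcal{L}$ acting on $L^2_{q^{-1}}(V)$, the weighted inner product is $\langle f,g\rangle_{q^{-1}} = \int_V fg\, q^{-1}\,dv$, so membership in $\langle q\rangle^\perp$ is characterized by
\[
\langle q,\varphi\rangle_{q^{-1}} = \int_V q(x,v)\varphi(v)\, q^{-1}(x,v)\,dv = \int_V \varphi(v)\,dv = \hat\varphi = 0.
\]
Substituting $\hat\varphi = 0$ into the definition of $\mathcal{L}$ gives $\mathcal{L}\varphi = -\mu\varphi + \mu q(x,v)\hat\varphi = -\mu\varphi$, so on this subspace $\mathcal{L}$ acts as multiplication by $-\mu$. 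Consequently its restriction is injective with inverse given by multiplication by $-\tfrac{1}{\mu}$, and the range coincides with $\langle q\rangle^\perp$, so the pseudo-inverse formula is verified.

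For $\mathcal{L}^*$ the argument mirrors the first, but the relevant orthogonality (which dualizes the $q^{-1}$-weighting used for $\mathcal{L}$) is
\[
\langle 1,\varphi\rangle = \int_V q(x,v)\varphi(v)\,dv = 0,
\]
so that the nonlocal term in $\mathcal{L}^*\varphi = -\mu\varphi + \mu\int_V q(x,v')\varphi(v')\,dv'$ vanishes identically, again reducing the operator to $-\mu I$. Inverting then yields $\bigl(\mathcal{L}^*|_{\langle 1\rangle^\perp}\bigr)^{-1} = -\tfrac{1}{\mu}$.

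The main obstacle is bookkeeping the inner-product conventions: for each operator the orthogonal complement of its kernel must be taken with respect to the Hilbert-space pairing that makes the nonlocal term in the operator coincide with the projection onto that kernel. Once this pairing is fixed (the $q^{-1}$-weighted inner product for $\mathcal{L}$, and its natural dual for $\mathcal{L}^*$), the lemma reduces to a one-line substitution; the functional-analytic content is simply that $\mathcal{L}$ and $\mathcal{L}^*$ are rank-one perturbations of $-\mu I$ whose perturbation direction coincides with the kernel, so the operators become scalar multiplications on the complement.
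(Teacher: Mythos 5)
Your treatment of $\mathcal{L}$ is correct and is essentially the paper's own argument: in the $q^{-1}$-weighted inner product one has $(\psi,q)_{q^{-1}}=\int_V\psi\,dv=\hat\psi$, so on $\langle q\rangle^\perp$ the rank-one term $\mu q\hat\psi$ drops and $\mathcal{L}$ reduces to multiplication by $-\mu$.

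The $\mathcal{L}^*$ half has a gap. The lemma (and the remark following Lemma \ref{l:kernels}) takes $\langle 1\rangle^\perp$ in the \emph{unweighted} $L^2(V)$, i.e. $(\varphi,1)=\int_V\varphi\,dv=0$; this is also the pairing used to check solvability at order $\ep^1$ in Section \ref{s:parabackward}. You instead declare $\langle 1,\varphi\rangle=\int_V q\varphi\,dv$, a $q$-weighted pairing, under which the nonlocal term of $\mathcal{L}^*$ vanishes by fiat. On the subspace the lemma actually refers to, $\{\varphi:\int_V\varphi\,dv=0\}$, the term $\mu\int_V q(x,v')\varphi(x,v')\,dv'$ does \emph{not} vanish for a general mass-free $\varphi$ when $q$ is non-constant, so $\mathcal{L}^*$ restricted to $\langle 1\rangle^\perp$ is not literally $-\mu I$ and your "one-line substitution" fails there. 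The paper closes this with one extra step: writing $\mathcal{L}^*\psi=\phi$ with $\psi,\phi\in\langle 1\rangle^\perp$ and integrating the equation over $V$ yields $\mu|V|\int_V q\psi\,dv=0$, hence $\int_V q\psi\,dv=0$ holds for the \emph{solution}, and only then does the equation collapse to $-\mu\psi=\phi$. Your final formula $\bigl(\mathcal{L}^*|_{\langle 1\rangle^\perp}\bigr)^{-1}=-1/\mu$ is correct, but to prove the lemma as stated you should either adopt the unweighted convention and add that integration step, or state explicitly that you are redefining the orthogonal complement via a $q$-weighted dual pairing, in which case you are proving a (consistent, but different) variant of the statement.
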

\begin{proof}
To find ${\cal L}^{-1}$ the pseudo-inverse of $\mathcal{L}$ on $\left(\mbox{ker} \, \mathcal{L} \right)^{\perp}$, the orthogonal complement of $\mbox{ker} \, \mathcal{L}$, 
we consider a given $\phi\in \mbox{ker} \, \mathcal{L}^\perp$ and find $\psi\in \mbox{ker} \, \mathcal{L}^\perp$ that solves $\mathcal{L}\psi = \phi$, so 
that $\psi = {\cal L}^{-1} \phi$. 
The equation $\mathcal{L}\psi = \phi$ reads 
\begin{equation}\label{Linverse}
-\mu \psi(x,v) +\mu q(x,v) \hat {\psi}(x)  = \phi(x,v) .
\end{equation}
The assumption that $\psi\in \langle q\rangle^\perp$ in $L^2_{q^{-1}} (V)$ implies that
\[ 0= (\psi, q)_{q^{-1}} = \int_V \psi(x,v) q(x,v) \frac{dv}{q(x,v)} = \hat{\psi}(x)\]
where $(\psi,q)_{q^{-1}}$ denotes the inner product of $\psi(x,v)$ and $q(x,v)$ in $V$ space using  $q(x,v)^{-1}$ as a weight function. 
Hence \eqref{Linverse} is solved as 
\[ \psi(x,v) = -\frac{1}{\mu} \phi(x,v).\]
Similarly, for $\mathcal{L}^*$ we consider $\phi \in \mbox{ker} \, \mathcal{L^*}^\perp$ 
and find $\psi\in \mbox{ker}\mathcal{L^*}^\perp$ that solves $\mathcal{L^*}\psi = \phi$. We find
\begin{equation}\label{Lstarinverse}
-\mu\psi(x,v) +\mu \int_Vq(x,v')\psi(x,v') dv' = \phi(x,v).
\end{equation}
We integrate this equation over $V$ and use the fact that $\phi,\psi \in \langle 1\rangle^\perp$ to find 
\[ 0 +\mu |V| \int_V q(x,v') \psi(x,v') dv' =0. \]
Hence equation \eqref{Lstarinverse} is solved as 
\[ \psi(x,v) = -\frac{1}{\mu} \phi(x,v).\]
implying that $\left( \mathcal{L}|_{\langle q \rangle^{\perp}}\right)^{-1} = -1/ \mu$. 
\end{proof}

We use these results in the next section where we derive
expressions for the mean first passage time through parabolic scaling
in the case of anisotropic transport where $K(x,v,v') = q(x,v)$ and under
symmetry conditions $q(x,v) = q(x,-v)$.

\subsection{Parabolic Scaling of the Forward Transport Equation}
\label{s:paraforward}

As mentioned above, parabolic scaling of the forward
transport equation is well established in the literature \cite{Bica,hillen2013transport}. 
In the anisotropic case the forward transport equation \eqref{kinetic2} for $p(x,v,t)$ reads 
\begin{equation}
p_t +v\cdot \nabla p = \mathcal{L} p.\end{equation}
We now introduce a small parameter $\ep>0$ and 
rescale macroscopic time and space through the {\it parabolic scaling} variables
$\tau$ and $\xi$ defined as
\begin{equation}\label{scales}
\tau =\ep^2 t, \qquad \xi=\ep x.
\end{equation}
The forward transport equation now becomes 
\begin{equation}\label{qforwardscaled}
\ep^2 p_\tau +\ep v\cdot \nabla_\xi p = \mathcal{L}  p, 
\end{equation}
where $p(\xi, v, \tau)$ is expressed in terms of the rescaled quantities. 
 Note that $\cal L$ contains
no derivatives so the formal expression for ${\cal L} p$ in 
\eqref{qforwardscaled}
is unaffected by the scaling procedure. 
We now expand \eqref{qforwardscaled} in powers of $\ep$ 
\begin{equation}\label{expansion}
    p(\xi,v,\tau) =p_0(\xi,v,\tau) +\ep  p_1(\xi,v,\tau) +\ep^2 p_2(\xi,v,\tau) +\cdots
    \end{equation}
where the functions $p_0, p_1, p_2,\ldots$ are to be determined by 
comparing orders of $\ep$ in  \eqref{qforwardscaled}. Results are as follows

\vspace{0.2cm}

\noindent
\begin{itemize}
\item[$\ep^0:$ ] 
{Collecting the zeroth-order in $\ep$ terms in \eqref{qforwardscaled} leads to  \[ 0 =\mathcal{L} p_0.\]
Hence, $p_0 \in \mbox{ker} \, \mathcal{L}$ by Lemma \ref{l:kernels} and}
\end{itemize}
\begin{equation}
\label{p0}
p_0(\xi,v,\tau) = \hat{p}_{0}(\xi, \tau) q(\xi, v), \qquad \hat {p}_0(\xi, \tau) = \int_V p_0(\xi, v, \tau) dv. 
\end{equation}
\begin{itemize}
\item[$\ep^1:$]
{Collecting the first-order in $\ep$ terms in \eqref{qforwardscaled} leads to
\[ v\cdot\nabla_\xi p_0 =  \mathcal{L} p_1. \]
This equation can be solved if the left hand side is in the orthogonal complement of the kernel of $\mathcal{L}$
in $L^2_{q^{-1}} (V)$. 
A direct calculation yields
\[ (v\cdot\nabla_\xi p_0, q)_{q^{-1}} \equiv \int_Vv\cdot\nabla_\xi \hat {p}_0(\xi,\tau) q(\xi,v) \frac{dv}{q(\xi, v)} =
 \nabla_\xi\cdot  \int_V v q(\xi, v) dv \  \hat{p}_0(\xi, \tau)  =0,\]
where $( v \cdot \nabla_{\xi} p_0 , q )_{q^{-1}}$ denotes the inner product of the two arguments in $V$ space using $q^{-1}$
as a weight function. 
The last identity comes from the symmetry condition $q(x,v) = q(x, -v)$, which implies $q(\xi,v) = q(\xi, -v)$
and thus $\int_V v q(\xi,v) dv =0$. Then,  according to Lemma \ref{l:kernels2}}
\end{itemize}
\begin{equation}
\label{p1}
p_1(\xi, v,\tau) = -\frac{1}{\mu}v\cdot\nabla_\xi\left( \hat {p}_0(\xi,\tau) q(\xi, v)\right) .
\end{equation}
\begin{itemize}
\item[$\ep^2:$ ] 
{Collecting the second-order in $\ep$ terms in \eqref{qforwardscaled} leads to
\[ p_{0\tau} +v\cdot\nabla_\xi p_1 = \mathcal{L} p_2.\]
We integrate this equation over $V$ and use \eqref{p0} and \eqref{p1} to find 
\[ \int_V \hat {p}_{0\tau}(\xi, \tau) q(\xi, v) \, dv +\int_V v\cdot\nabla_\xi \left(-\frac{1}{\mu} v\cdot\nabla_\xi (\hat{p}_0 (\xi, \tau) q(\xi, v)) \right) dv = 0,\]
which we can write using tensor notation as 
\[ \hat {p}_{0\tau} = \frac{1}{\mu}\nabla_\xi\otimes\nabla_\xi:\left[ \hat{p}_0(\tau,\xi) \int_V v\otimes v q(\xi, v) dv \right].\]
Finally, the parabolic limit of the forward transport equation can be written through the introduction of a diffusion tensor $\mathbb{D}(\xi)$ as follows}
\end{itemize}
\begin{equation}
\label{forwardlimit}
\hat p_{0\tau} (\xi, \tau) = \nabla_\xi\otimes\nabla_\xi:\left[\mathbb{D}(\xi) \hat {p}_0(\xi,\tau) \right], \quad \mbox{with} \quad \mathbb{D}(\xi) =\frac{1}{\mu}\int_V v\otimes v q(\xi, v) dv.
\end{equation}
\begin{itemize}
\item[]{Equation \eqref{forwardlimit}
is a well studied anisotropic diffusion equation \cite{Risken1992}.
The mean first passage time $\tau(\xi)$ to the boundary of the spatial domain can be obtained by introducing the survival probability as shown in Section \ref{survprob} and following the same 
procedures here described. We do this below.} 
\end{itemize}
%
%
%

\subsection{Parabolic Scaling of the Backward Kolmogorov Equation}
\label{s:parabackward}
Under anisotropic, symmetric transport, the backward Kolmogorov equation for the survival probability $s(x,v,t)$
in \eqref{survive_dyn2} can be written as  
\[ s_t -v\cdot \nabla s = \mathcal{L}^* s .\]
Similarly to the forward case, we rescale space and time as in \eqref{scales} to obtain
\begin{equation}
\label{qbackwardscaled}
\ep^2 s_\tau -\ep v\cdot\nabla_\xi s =\mathcal{L}^* s. 
\end{equation}
We can now expand $s(\xi,v, \tau)$ in powers of $\ep$ so that
\[ s(\xi,v,\tau) =s_0(\xi,v,\tau) +\ep  s_1(\xi,v,\tau) +\ep^2 s_2(\xi,v,\tau) +\cdots\]
and compare orders of $\ep$ as follows

\vspace{0.2cm}
\noindent
\begin{itemize}
\item[\textbf{$\ep^0:$ } ] 
{Collecting the zeroth-order in $\ep$ terms in \eqref{qbackwardscaled} leads to  
\[
0= \mathcal{L}^* s_0. 
\]
Hence, $s_0 \in {\mbox {ker}} \, {\cal L}^*$ by Lemma \ref{l:kernels} and is independent of $v$:}
\end{itemize}
\begin{equation}\label{assumptions2}
s_0(\xi, v, \tau) = s_0 (\xi, \tau).
\end{equation}
\begin{itemize}
\item[$\ep^1:$ ] 
{Collecting the first-order in $\ep$ terms in \eqref{qbackwardscaled} leads to  
\[ -v\cdot\nabla_\xi s_0 = \mathcal{L}^* s_1.\]
We check the solvability condition on $\langle1\rangle^\perp$:
\[ (v\cdot\nabla_\xi s_0, 1) = \nabla_\xi\cdot \int_Vv dv \, s_0(\xi,\tau) =0.\]
The last identity arises from the assumption of symmetry in the velocity space $V$
so that if $v \in V$ then also $-v \in V$.  Lemma \ref{l:kernels2} 
implies}
\end{itemize}
\begin{equation}
\label{s1}
s_1(\xi, v, \tau ) = \frac{1}{\mu} v\cdot\nabla_\xi s_0(\xi,\tau). 
\end{equation}
\begin{itemize}
\item[$\ep^2:$ ] 
{Collecting the second-order in $\ep$ terms in \eqref{qbackwardscaled} leads to  
\[ s_{0\tau} -v\cdot\nabla_\xi s_1 = \mathcal{L}^* s_2 = -\mu s_2 +\mu \int_Vq(\xi, v') s_2(\xi,v',\tau) dv'.\]
We multiply this equation by $q(\xi, v)$, integrate over $V$, and express $s_1$  through \eqref{s1} to 
find
\begin{eqnarray*}
    s_{0\tau} -\int_V q(\xi,v) v\cdot\nabla_\xi \left(\frac{1}{\mu} v\cdot\nabla_\xi s_0(\xi,\tau) \right) dv 
 &=& \\ &&  \hspace{-4cm}    -\mu\int_V q(\xi,v) s_2(\xi, v,t) dv +\mu \int_V q(\xi,v) dv \int_Vq(\xi, v') s_2(\xi,v'\tau) dv'  = 0, 
\end{eqnarray*}
where we use the condition $\int_V q(x, v) dv = 1$ which also implies $\int_V q(\xi, v) dv = 1$. 
The above identity can be rewritten as 
\[ s_{0\tau} =\frac{1}{\mu}\int_V q(\xi,v)  v\otimes v dv :\nabla_\xi\otimes\nabla_\xi s_0.\]
Finally,  utilizing the same diffusion tensor as in  \eqref{forwardlimit} we  write 

\[ s_{0\tau} = \mathbb{D}(\xi):\nabla_{\xi}\otimes\nabla_{\xi} s_0, \]
which is the parabolic limit of the backward equation \eqref{survive_dyn2}. 
If we integrate this equation over time
from $0$ to $\infty$, and use the fact that $s_0(\xi,0)=1$ then we obtain the MFPT equation for $\tau(\xi)$}
\end{itemize}
\begin{equation}\label{eqn:MFPTJump}
-1 = \mathbb{D}(\xi) :\nabla_{\xi}\otimes\nabla_{\xi} \tau(\xi),
\end{equation}
%

Shifting back to the original spatio-temporal coordinates $(x,t)$, and using $T(x) = \varepsilon^2 \tau(\varepsilon x)$,
we find an expression for the mean first passage time $T(x)$ that is independent of 
the velocity $v$
\begin{equation}
\label{anisotropicMFPT3} 
-1 = \mathbb{D}(x) :\nabla\otimes \nabla\, T(x), \qquad \mathbb{D}(x) = \frac{1}{\mu} \int_V v\otimes v\, q(x,v) dv.
\end{equation}

\al{Note that if $\Theta$ is independent of $v$, 
this equation follows directly from \eqref{anisotropicMFPT}.}

\section{Applications}\label{s:applications}

To illustrate our results we consider several examples of anisotropic oriented movement in two-dimensional regions. We 
begin with the bimodal von-Mises distribution introduced in \eqref{bimodalvonMises} with $n=2$ to express the orientation biases of the random walker. Since in this case $V = \sigma {\Sn} = \sigma \Sone$, 
and $\omega = \sigma^{n-1} =\sigma$ from \eqref{norm}, we write
\begin{equation}\label{eq:vonMises}
q(x, v) = \frac{\tilde{q}(x, \theta)}{\sigma} = \frac{1}{4\pi \sigma I_0(k(x))}
\left( e^{k(x) \gamma(x)\cdot \theta} +e^{- k(x) \gamma(x)\cdot \theta)}\right) = \frac{\cosh({k(x) \gamma(x)\cdot \theta})}{2\pi \sigma I_0(k(x))},
\end{equation}
where $k(x) \geq 0$ is a measure of concentration and the 
unit vector $\gamma(x)$ defines the preferred orientation at location $x\in\Omega$. 
The bimodal von-Mises distribution includes both directions $\pm \gamma(x)$ 
to allow for symmetric movement along the preferred orientation. 
The functions $I_j(k(x))$ are modified Bessel functions of the first kind of order $j\geq 0$.  
Note that if $k(x)=0$ then $q(x,v)$ becomes the uniform distribution on a circle $q(x,v) = 1/(2\pi \sigma)$ and 
that if we let $k(x) \to \infty$ the distribution displays two $\delta$-singularities, one at $\theta=\gamma(x)$ and one at $\theta=-\gamma(x)$.

In this scenario, where particles move at fixed speed $\sigma$ and 
assuming the turning kernel biases motion along a preferred orientation $\gamma(x)$ 
that does not depend on incoming velocity, we assume that we are close to the parabolic limit, $\Theta(x,v) =T(x)$, and use \eqref{anisotropicMFPT3}. 
The diffusion tensor 
$\mathbb D(x)$  that appears in \eqref{anisotropicMFPT3} can be calculated using  \eqref{difftensor}
leading to \cite{vonMises} 
\begin{equation}
\label{eq:DiffVonMises}
 \mathbb{D} (x)= \frac{\sigma^2}{2\mu}\left(1-\frac{I_2(k(x))}{I_0(k(x))} \right) \mathbb{I} +\frac{\sigma^2}{\mu} \frac{I_2(k(x))}{I_0(k(x))} \gamma(x) \gamma^T(x).
\end{equation}
In the following examples, we show solutions of \eqref{anisotropicMFPT3} using
various choices for the domain $\Omega$ and the fields $\gamma(x)$ and $k(x)$ in \eqref{eq:DiffVonMises}.

\subsection{MFPT for Anisotropic Transport on a circular domain}
\label{sec:disk}

Bica et al \cite{Bica} studied anisotropic transport on a two-dimensional disk 
where the bimodal von-Mises distribution was used to describe biological particles moving
in radially symmetric environments. Applications include
intra-cellular transport along microtubules, cancer cell migration along collagen fibres, or
animals aggregating near watering holes. 
Here we similarly assume a two-dimensional circular domain $\Omega$ with radius $R_0$, 
and employ planar polar coordinates $ (r,\ff)$.  We now need to specify $k(x)$, $\gamma(x)$.
For most of this section we let the parameter of concentration be uniform,  $k(x) = k \geq 0$, 
however we keep general $k(x)$ in our analytical derivations. 
We also assume that the preferred orientation $\gamma(x)$ is either radial, {\textit {i.e.}} 
as if along the spokes of a bicycle wheel, or circular, along the direction perpendicular
to the radial direction, following concentric circles inside the disk. 
If we now write the radial unit vector as $\hat x = x/||x|| = (\cos \ff , \sin \ff) $
and its orthogonal as $\hat x^{\perp} = ( - \sin \ff , \cos \ff) $,  then
  $\gamma(x)$ can be represented as
\begin{equation}
\gamma(x) = \left\{\begin{array}{ll} 
\hat x 
& \mbox{for radial orientation,} \\[5pt]
\hat x^{\perp} 
& \mbox{for circular orientation.} 
\end{array}\right.
\end{equation}
These two choices can be combined into a common notation by introducing 
the anisotropy indicator $\alpha(x)$
 as follows \cite{Bica} 
\bsub
\label{eqn:DiffTensor}
\begin{equation}\label{radialD}
\mathbb{D}(x) = \frac{\sigma^2}{\mu} \left\{\begin{array}{lcl}
\frac{1}{2}(1-\alpha(x))\mathbb{I} +\alpha(x) \hat x \hat x^T &\qquad& x\neq 0\\[1ex]
\frac{1}{2} \mathbb{I} && x =0. \end{array}\right.
\end{equation}
where $\alpha(x)$ for general $k(x)$ is given by
\begin{equation}
\label{radialD2}
\alpha(x) = \frac{I_2(k(x))}{I_0(k(x))} \times \left\{\begin{array}{rl} 
+1 & \mbox{for radial orientation,} \\[5pt]
-1 & \mbox{for circular orientation.}
\end{array}\right.
\end{equation}
\esub
If we set $k(x) = k$ then $\alpha(x) = \alpha$ is also uniform, and since $I_2(k) \leq I_0(k)$
for all $k \geq 0$ it is also true that $\alpha \in (-1,1)$.  Note that the uniform distribution
that arises from setting $k(x) = 0$ in \eqref{eq:vonMises}
corresponds to $\alpha(x) = 0$ due to  \eqref{radialD2} and
the properties of the Bessel functions.  The limit $k(x) \to \infty$, 
which leads to a singularity along $\pm \gamma(x)$ 
as discussed above, results in $\alpha (x) \to 1$ 
for radial orientation and $\alpha (x) \to -1$ for circular orientation. This implies
that motion is singularly biased along the radial direction for
$\alpha \to 1^{-}$ and along the tangential direction for  $\alpha \to -1^{+}$. 
Finally, in \eqref{radialD}
$\mathbb{I}$ is the identity matrix and the tensor product $\hat x \hat x^T$ 
 is given by
\[ \hat x \hat x^T = \left(\begin{array}{cc} \cos^2\ff & \sin \ff \cos\ff \\ \sin\ff\cos\ff & \sin^2\ff \end{array}\right). \]
We can now solve the anisotropic mean first passage time equation \eqref{anisotropicMFPT3}. 
Given the radial symmetry, we use the notation 
$T_{\alpha}(r)$ to indicate the MFPT for the random walker starting at $r$ under the 
anisotropy indicator $\alpha$.
A direct calculation of \eqref{anisotropicMFPT3} and \eqref{eqn:DiffTensor} 
assuming $\alpha(x) = \alpha(r)$ is only radially dependent,
yields
\begin{equation}\label{eqn:MFPT_radial}
\frac{1+\alpha(r) }{2r} \, \Big( r T'_{\alpha } (r)\Big)'  -
 \frac{\alpha (r)}{r}  T'_{\alpha}(r) = -\frac {1}{ 2D}
\end{equation}
where the diffusion constant $D = \sigma^2/ (2 \mu)$  is consistent with 
the $n=2$ diffusivity introduced in \eqref{ndimdiff}. 
Note that the case $\alpha =0$ corresponds to the standard diffusion process in 
two-dimensions. 
The general solution of \eqref{eqn:MFPT_radial} is
\begin{equation}
\label{disk1}
T_{\alpha}(r) = -\frac{r^2}{4D} + H_1 + H_2 \int^r \exp\left[-\int^{\eta} \frac{1}{s} \frac{1-\alpha(s)}{1+\alpha(s)} d s\right] d\eta,
\end{equation}
where $H_1, H_2$ are constants that depend on the chosen boundary conditions. 
For uniform $\alpha(s) = \alpha$, \eqref{eqn:MFPT_radial}
reduces to 
\begin{equation}
\label{disk2}
T_{\alpha}(r) =  -\frac{r^2}{4D} + H_1 + H_2 r^{\frac{2\alpha}{1+\alpha}}.
\end{equation}
Below, we solve for $H_1, H_2$ 
in given geometric scenarios 
and for fixed $\alpha \in (-1,1) $. Note that
since \eqref{disk2} contains two unknowns, two conditions are necessary to identify them.

\subsubsection{Exit from a disk:} 
In this scenario we assume the random walker can exit the disk only through the boundary
at $r = R_0$. This implies that the mean first exit time at $r=R_0$ must be zero. We also impose smoothness at the origin so that all derivatives at $r=0$ be zero. 
Thus, we solve \eqref{disk2} in the domain $0<r<R_0$ with boundary conditions
\begin{equation}
T'_{\alpha}(0) = 0, \quad T_{\alpha}(R_0) =0,
\end{equation}
to obtain the solution
\begin{equation}\label{solutionlong}
T_{\alpha}(r) =  \frac {1}{4D} \big( R_0^2 - r^2 \big).
\end{equation}
Remarkably,  \eqref{solutionlong} is independent of $\alpha$. 
Thus, the two-dimensional MFPT to reach the boundary of a disk 
for a particle moving under the von-Mises jump distribution \eqref{eq:vonMises} 
with a uniform concentration parameter
$k(x) = k$ has the same form as in the case of a random, diffusive process.

\begin{figure}[t]
\centering
    \subfigure[Exit from inner circle.]{\includegraphics[width = 6cm]{./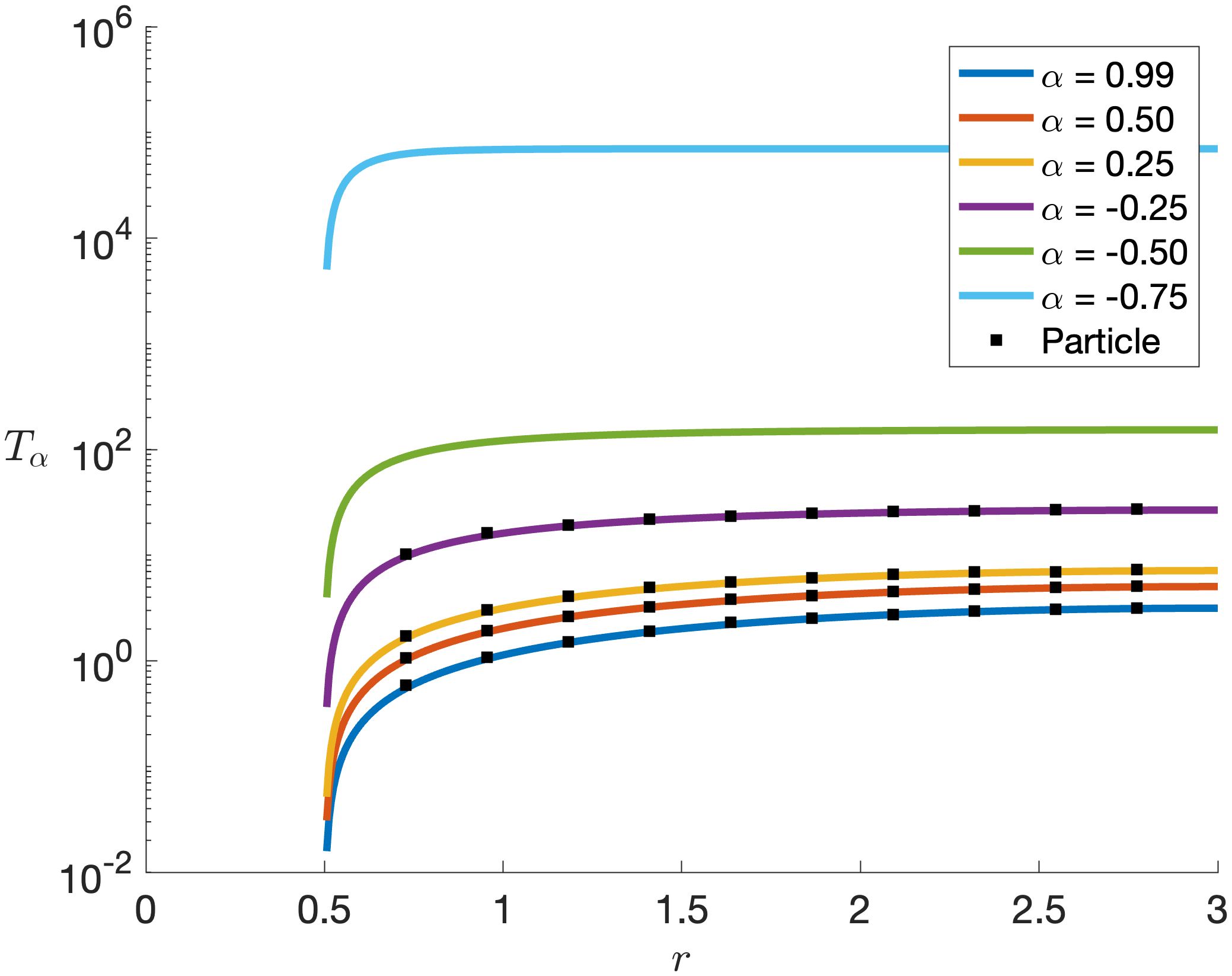}}\qquad
    \subfigure[Exit from inner or outer circle.]{\includegraphics[width = 6cm]{./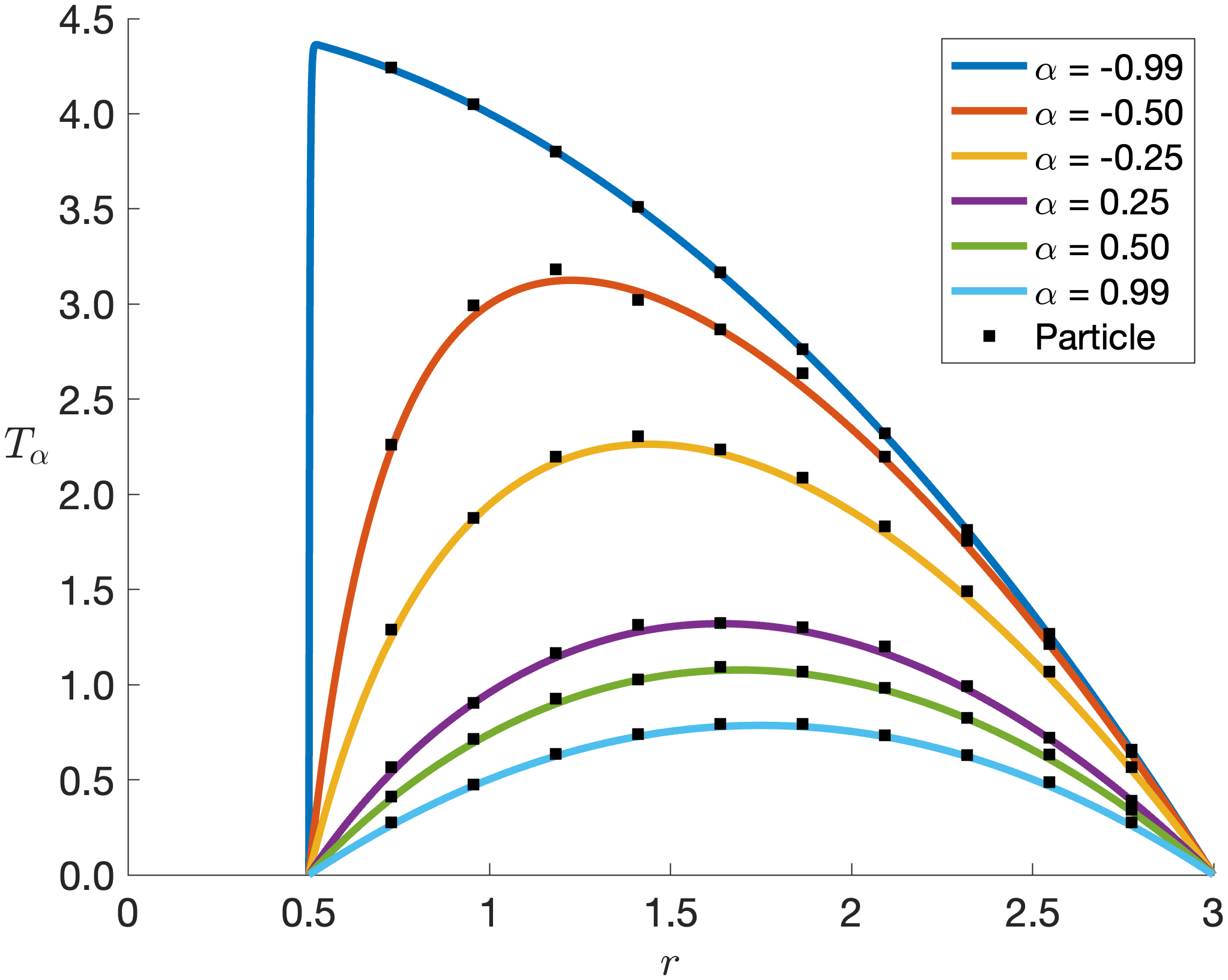}}
    \caption{The MFPT \eqref{eqn:MFPT_radial} for $D=0.5$ on an annulus with domain $\rho<r<R_0$, inner circle radius $\rho = 0.5$, outer circle radius $R_0 = 3$ and several values of $\alpha \in (-1,1)$.  Panel (a): Particles exit the annulus only through the inner circle; the corresponding boundary conditions under which to solve \eqref{disk2} are $T_\alpha(\rho) = T'_\alpha(R_0)=0$. Note that $T_\alpha(r)$ increases with $\alpha$ for given $r > \rho$ and $T_\alpha(r > \rho)\to\infty$ as $\alpha \to -1^{+}$ as shown in \eqref{circinner}. Under purely circular motion particles move tangentially and cannot decrease their distance from the origin, hence they will never exit the annulus.    
    Panel (b): Particles may exit the annulus 
    through both the inner and outer circles; the corresponding boundary conditions under which to solve \eqref{disk2} are $T_\alpha(\rho) = T_\alpha(R_0)=0$. In the limit $\alpha\to-1^+$ the solution $T_\alpha(r)$ displays a discontinuity at $r=\rho$ as particles located at $r \to \rho^{+}$ will be able to exit the annulus only through the outer circle at $r= R_0$. Particle simulations (black squares) based on $N=10^4$ independent stochastic simulations of \eqref{kinetic1} with $\mu = 10^4$, $\sigma = \sqrt{\mu}$ agree with theoretical predictions. Specialized numerical algorithms are needed to simulate the exit from the inner circle as $\alpha \to {-1}^{+}$ since $T_{\alpha}\gg1$. Indeed, we note the difference in the scale of the vertical axes between the two panels. All quantities are in arbitrary units.}
    \label{fig:MFPT_annulus}
\end{figure}

\subsubsection{Exit from the inner circle of an annulus:} 
We now consider motion constrained to an annular region so that 
 $\rho<r<R_0$ for all trajectories. As shown below,  
the effects of anisotropy can be more pronounced here compared to motion in a disk.
We specifically assume that the random walker can only exit the annulus via the inner circle
at $r = \rho$ and impose reflecting (Neumann) boundary conditions at the outer circle at 
$r = R_0$.  Thus we solve \eqref{disk2} in the domain $\rho<r<R_0$ with boundary conditions
\begin{equation}
T_{\alpha}(\rho) = 0, \quad T'_{\alpha}(R_0) = 0.
\end{equation}
The general solution of \eqref{eqn:MFPT_radial} under uniform $\alpha$ is
\begin{equation}
\label{MFPTgeneral}
T_{\alpha}(r) = \frac 1 {4 D}  (\rho^2 - r^2) + 
\frac {1}{2 \beta D  } R_0^{\frac {\beta} {\alpha }} 
(r^{\beta} - 
\rho ^{\beta}), \qquad {\mbox {with}} \qquad \beta \equiv \frac{2\alpha}{1+\alpha}.
\end{equation}
We then derive the limits for the MFPT 
corresponding to motion that is 
purely radial ($\alpha \to 1^{-}$), isotropic ($\alpha \to  0$), 
 or purely circular ($\alpha \to -1^{+}$) 
\bsub
\begin{align}
\label{radinner}
\lim_{\alpha\to {1^{-}}} T_\alpha (r) &=  \frac{1}{4D}(\rho^2 - r^2) + \frac{R_0}{2D}(r -\rho),\\[4pt]
\label{neuinner}
\lim_{\alpha\to 0} T_\alpha (r) & =\frac{1}{4D}(\rho^2 - r^2) + \frac{R_0^2}{2 D} \log\Big(\frac{r}{\rho}\Big), \\[4pt]
\label{circinner}
\lim_{\alpha\to -1^{+}} T_\alpha (r) & = \left\{ \begin{array}{cl} +\infty & r>\rho;\\[4pt]
0 & r = \rho.\end{array} \right. 
\end{align}
\esub
Since the function $r - \rho - R_0 \log (r/\rho)$ is a negative, decreasing function of $r$
for $\rho <  r < R_0$ the MFPT in the purely radial case 
$(\alpha \to 1^{-}$ in  \eqref{radinner})
is less than in the isotropic case $(\alpha \to 0$ in \eqref{neuinner}). 
This implies that exiting the annulus from its inner circle
is faster if the motion is constrained along the radial direction rather than being isotropic. Also note that under purely circular motion ($\alpha \to -1^{+}$ in \eqref{circinner}) the MFPT $T_{\alpha} (r > \rho) = +\infty$
for any starting position $r$.  Since the preferred motion is along the tangent direction, a particle that started at $r > \rho$ can never randomly turn to smaller values of $r$ 
to reach the only exit boundary at $r=\rho$. Hence, it will remain trapped in the annulus for infinite time.

\subsubsection{Exit from the outer circle of an annulus:} 
We now consider the opposite scenario where the random walker can only exit the annulus via the outer 
circle at $r = R_0$ and impose reflecting (Neumann) boundary conditions at the inner circle at 
$r = \rho$.  Thus we solve \eqref{disk2} in the domain $\rho<r<R_0$ with boundary conditions
\begin{equation}
T'_{\alpha}(\rho) = 0, \quad T_{\alpha}(R_0) = 0.
\end{equation}
The general solution of \eqref{eqn:MFPT_radial} under uniform $\alpha$ is given by replacing
$\rho$ and $R_0$ in \eqref{MFPTgeneral}

\begin{equation}
T_{\alpha}(r) = \frac 1 {4 D}  (R_0^2 - r^2) + 
\frac {1}{2 \beta D  } \rho^{\frac {\beta} {\alpha }} 
(r^{\beta} - 
R_0 ^{\beta}), \qquad {\mbox {with}} \qquad \beta \equiv \frac{2\alpha}{1+\alpha}.
\end{equation}
The limits for the MFPT 
corresponding to motion that is 
purely radial ($\alpha \to 1^{-}$), isotropic ($\alpha \to  0$), 
 or purely circular ($\alpha \to -1^{+}$) are
\bsub
\begin{align}
\label{radouter}
\lim_{\alpha\to {1^{-}}} T_\alpha (r) &=  \frac{1}{4D}(R_0^2 - r^2) + \frac{\rho}{2D}(r -R_0),\\[4pt]
\label{neuouter}
\lim_{\alpha\to 0} T_\alpha (r) & =\frac{1}{4D}(R_0^2 - r^2) + \frac{\rho^2}{2 D} \log\Big(\frac{r}{R_0}\Big), \\[4pt]
\label{circouter}
\lim_{\alpha\to -1^{+}} T_\alpha (r) & = \frac{1} {4 D} (R_0^2 - r^2).
\end{align}
\esub
In this case, the function $r - R_0 - \rho  \log (r/R_0)$ is a negative, increasing function of $r$
for $\rho <  r < R_0$ so that the MFPT in the purely radial case $(\alpha \to 1^{-}$ in
\eqref{radouter})
is still less than in the isotropic case $(\alpha \to 0$ in \eqref{neuouter}). 
Thus, exiting the annulus from its outer circle
is faster if particle motion is constrained along the radial direction rather than being isotropic. Since $r < R_0$, comparing 
\eqref{circouter} to \eqref{radouter} and \eqref{neuouter}
shows that purely circular motion yields the longest MFPT
to exiting the annulus. Furthermore for $\alpha \to -1^{+}$ 
the MFPT does not depend on $\rho$ since
the motion is purely tangential and, as discussed above, a particle starting
at $r > \rho$ will always randomly increase its distance from origin. 

Upon comparing \eqref{radinner} and \eqref{radouter}, one can show that under purely radial motion, the MFPT to exit the annulus from the outer or inner circle is the same if trajectories start at $r = r_{\rm c}= (\rho + R_0)/2$, the exact midpoint. For 
$r > r_{\rm c}$ the MFPT is shortest if exiting from the outer circle, for 
$r < r_{\rm c}$ the MFPT is shortest if exiting from the inner circle.

\subsubsection{Exit from the inner or outer circle of an annulus:} 
Finally, we solve \eqref{disk2} in the case where a particle can exit the annulus from both inner and outer circles using the following boundary conditions
\[
T_{\alpha} (\rho) = T_{\alpha}(R_0) = 0.
\]
We find
\begin{equation}
    T_{\alpha}(r) = - \frac 1 {4 D} r^2 + \frac 1 {4 D} 
\left( \frac{R_0^{\beta} \rho^2 - R_0^2 \rho^{\beta}}{R_0^{\beta} - \rho^{\beta}} \right)
+  
 \frac {r^{\beta}}{4D} \left( \frac{R_0^2 - \rho^2}{R_0^{\beta} - \rho^{\beta}}
\right),  \qquad \beta = \frac{2\alpha}{1+\alpha}.
\end{equation}
As done above, we highlight the limiting cases 
of purely radial ($\alpha \to 1^{-}$),
isotropic ($\alpha = 0$),
and purely circular ($\alpha \to -1^{+}$) motion to find
\bsub
\begin{align}
\label{radboth}
\lim_{\alpha\to {1^{-}}} T_\alpha (r) &= \frac{1}{4D}(\rho - r)(r-R_0),\\[4pt]
\lim_{\alpha\to 0}  T_\alpha (r) &= \frac 1 {4 D}
\frac {(R_0^2 - \rho^2) \log r - (r^2 - \rho^2) \log R_0 
+ (r^2 -R_0^2) \log \rho} { \log R_0 - \log \rho },\\[4pt]
\label{circboth}
\lim_{\alpha\to -1^{+}} T_\alpha (r) & = \left\{ \begin{array}{cl} 
\displaystyle{\frac 1 {4D}}
(R_0^2 -r^2)& r>\rho;\\[4pt]
0 & r = \rho.\end{array} \right. 
\end{align}
\esub
By comparing \eqref{radboth} 
with \eqref{radinner} and \eqref{radouter},  one can show 
that under purely radial motion the MFPT to exit the annulus from either the inner or 
outer circle is always less compared to when exit is allowed from only one of the two boundaries. Note the discontinuity in \eqref{circboth}: under purely circular motion particles starting
at $r > \rho$ can exit the annulus through the outer circle so we recover
the same result as in \eqref{circouter}.

\subsection{MFPT for anisotropic transport with linear features.}
\label{sec:linear}

\begin{figure}[t]
\centering
    \includegraphics[width=1.0\linewidth]{./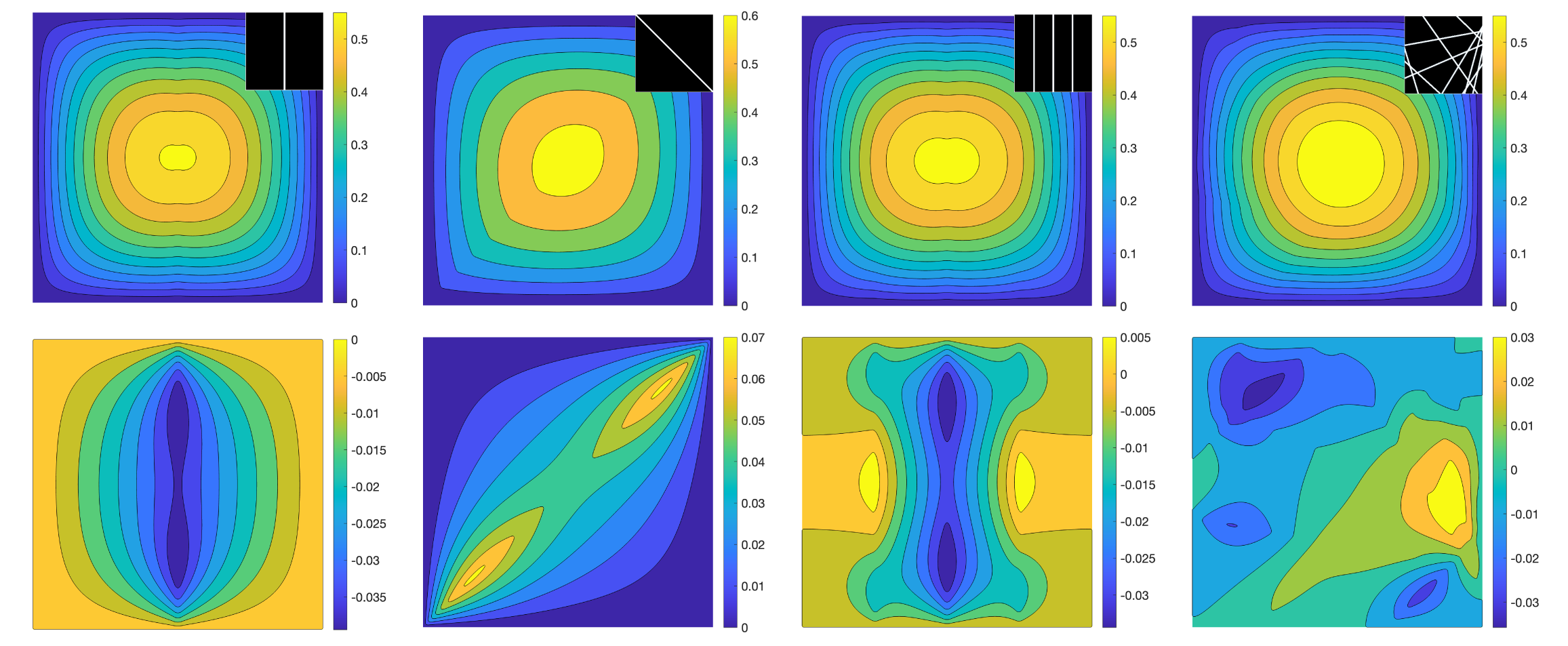}
    \caption{Top row: The MFPT to the boundary of the domain $\Omega = [-1,1]^2$
    in the presence of one or multiple linear segments orienting particle motion.
    Results are obtained by numerically solving \eqref{anisotropicMFPT3} for $T(x)$ in the domain $\Omega = [-1,1]^2$ under the bimodal von-Mises distribution \eqref{eq:vonMises} modulated by the linear structures 
    shown in the inset as described in \eqref{eq:linear_eqn}. The cutoff distance $d_0$ for particles to move along the linear structures is set at $d_0 = 0.02$ and the corresponding measure of concentration is 
    $k_0=25$.
    The anisotropy profiles from left to right are a vertical line, a slant line, 3 parallel vertical lines and 10 randomly intersecting lines. Bottom row: 
    The difference between the MFPT under anisotropic transport and the MFPT for a random walker in the same two-dimensional domain. The scenario of 10 randomly intersecting segments in the right-most panels yields the smallest difference between the two MFPTs. All quantities are in arbitrary units.    
    \label{fig:MFPT_linear}}
\end{figure}

As discussed previously, anisotropic transport 
is often facilitated by the presence of linear structures in the environment. 
For example, roads and seismic lines modulate animal movement in ecological 
landscapes while intra-cellular transport occurs along actin filaments or
microtubules. Multiple segments (of roads, or filaments) carry random relative orientations and occasionally intersect.  As such, we study the MFPT in the presence
of one or several preferential orientations. For concreteness, we use a square domain $\Omega$ and assume that particles can exit the domain
through any of its edges. We thus solve \eqref{anisotropicMFPT3} with
absorbing (Dirichlet) boundary conditions.  To model the presence of linear features we 
introduce a bimodal von-Mises distribution constructed using \eqref{eq:vonMises} and a tessellation process. For each point $x$ we find the Euclidean distance $d(x)$ to the closest linear feature and evaluate the anisotropic distribution in \eqref{eq:vonMises} by setting $k(x), \alpha(x)$ as follows
\begin{equation}\label{eq:linear_eqn}
k(x) = \left\{ \begin{array}{rl}
k_0, & d(x) < d_0\\[4pt] \qquad
0, & \mbox{otherwise} 
\end{array} \right., 
\qquad  \alpha(x) = \frac{I_2(k(x))}{I_0(k(x))}.
\end{equation}
The directional field $\gamma(x) = (\cos\ff,\sin \ff)$ is taken as the slope of the nearest line to point $x$. 
These choices imply that once a particle is at a distance greater
than a cutoff value $d_0$ from any linear feature, {\textit {i.e.}}
for $d(x) \geq d_0$, movement is an isotropic random walk since in this case $k(x) = \alpha(x) = 0$. Once the particle is closer to a linear feature, for $d(x) < d_0$, movement is biased towards a bidirectional random walk along the
$\pm \gamma(x)$ directions corresponding to the specific linear feature as per \eqref{eq:vonMises}.  The larger the value of $k_0$, the larger the 
bias along $\pm \gamma(x)$.  
Note that 
setting $\alpha(x)$ positive in \eqref{eq:linear_eqn} corresponds to radial orientation in \eqref{radialD2} and that $k_0 \to \infty$ yields $\alpha (x: d(x) < d_0) \to 1$.  
The contour plots shown in the top row of Fig.~\ref{fig:MFPT_linear} are the numerical solutions of \eqref{anisotropicMFPT3} 
and \eqref{eq:linear_eqn} using linear segments given by a vertical line, a slant line, 3 parallel vertical lines and 10 randomly intersecting lines as shown in the inset and using $k_0=25$ and $d_0 = 0.02$ which correspond to $\alpha( x: d(x) < d_0) = 0.922$. 
For these parameters the effective motion is a combination of isotropic two-dimensional ($d(x) \geq d_0$) and quasi one-dimensional random walks $(d(x) < d_0)$. 
The modulation of the MFPT due to the simple linear features is clearly distinguishable in all four scenarios shown in Fig.~\ref{fig:MFPT_linear}. We also numerically evaluate the MFPT in the same square geometry but under isotropic diffusion, which corresponds to setting $d_0 \to 0$ in \eqref{eq:linear_eqn}, and show the difference between 
the two MFPTs in the bottom row of Fig.~\ref{fig:MFPT_linear}.
The right-most panels in Fig.~\ref{fig:MFPT_linear} with 10 randomly oriented linear features yield an MFPT that is closest to an isotropic random walker.  

\subsection{MFPT on an ecological landscape}\label{sec:wolf}

Finally, we consider an actual ecological landscape with linear features to specifically 
model animal movement using the results and methods of Section \ref{sec:linear}. McKenzie 
et al.\,\cite{MCKENZIE2009,McKenzie2012} have previously used the anisotropic model \eqref{anisotropicMFPT3} and fitted it to oriented movement of red foxes in Minnesota in the presence of prey and a den site \cite{MCKENZIE2009}, and to wolf movement data in forest areas of the Canadian Rocky Mountains, where forests are disturbed by seismic lines \cite{McKenzie2012}. They find that preferred movement of wolves along seismic lines increases the encounter rate of predator and prey, thereby reducing prey density. 

Here we revisit this case as an illustration and use an aerial image
that was not used in \cite{McKenzie2012}. Fig.\,\ref{fig:Wolf_a} shows
a landscape of intersecting roads and seismic lines in the 
boreal forest of Western Canada. In \cite{vonMises,hillen2013transport} the forward transport equation \eqref{kinetic2} was solved on the rectangular domain of this image. Here we use the same image to evaluate the MFPT to the boundary of the domain.
The image was first digitized and thresholded such that 
landscape features are represented by a boolean variable. 
The  binarized image is then processed into two 
fields, $\gamma(x)$ and $d(x)$, representing the direction of, and distance to, the 
closest linear feature for each $x \in \Omega$, where $\Omega$ is the rectangular domain 
of the image. The resulting linear features and their associated directions are shown in
Fig.~\ref{fig:Wolf_c}. The directional field is incorporated into the bimodal Von-Mises distribution \eqref{eq:vonMises} so that motion near linear features is biased along the two directions $\pm\gamma(x)$. In the blank space of Fig.~\ref{fig:Wolf_c}, where directional information is not shown, the diffusion is isotropic.  

\begin{figure}[t]
    \centering
    \subfigure[Forest image.]{\includegraphics[width=4.5cm]{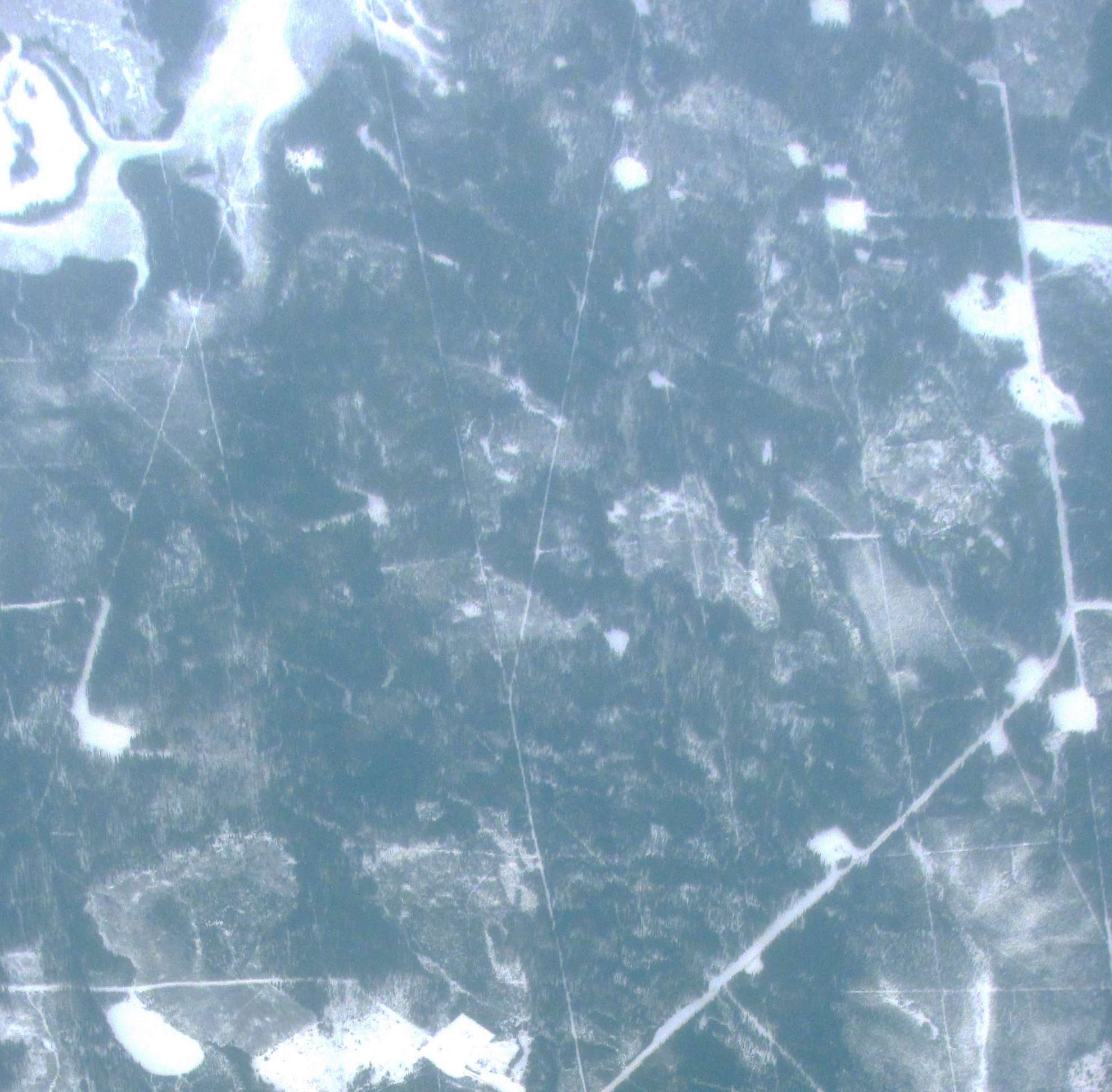}\label{fig:Wolf_a}}\qquad
    \subfigure[Linear features with orientation.]{\includegraphics[width=4.3cm,height=4.3cm]{./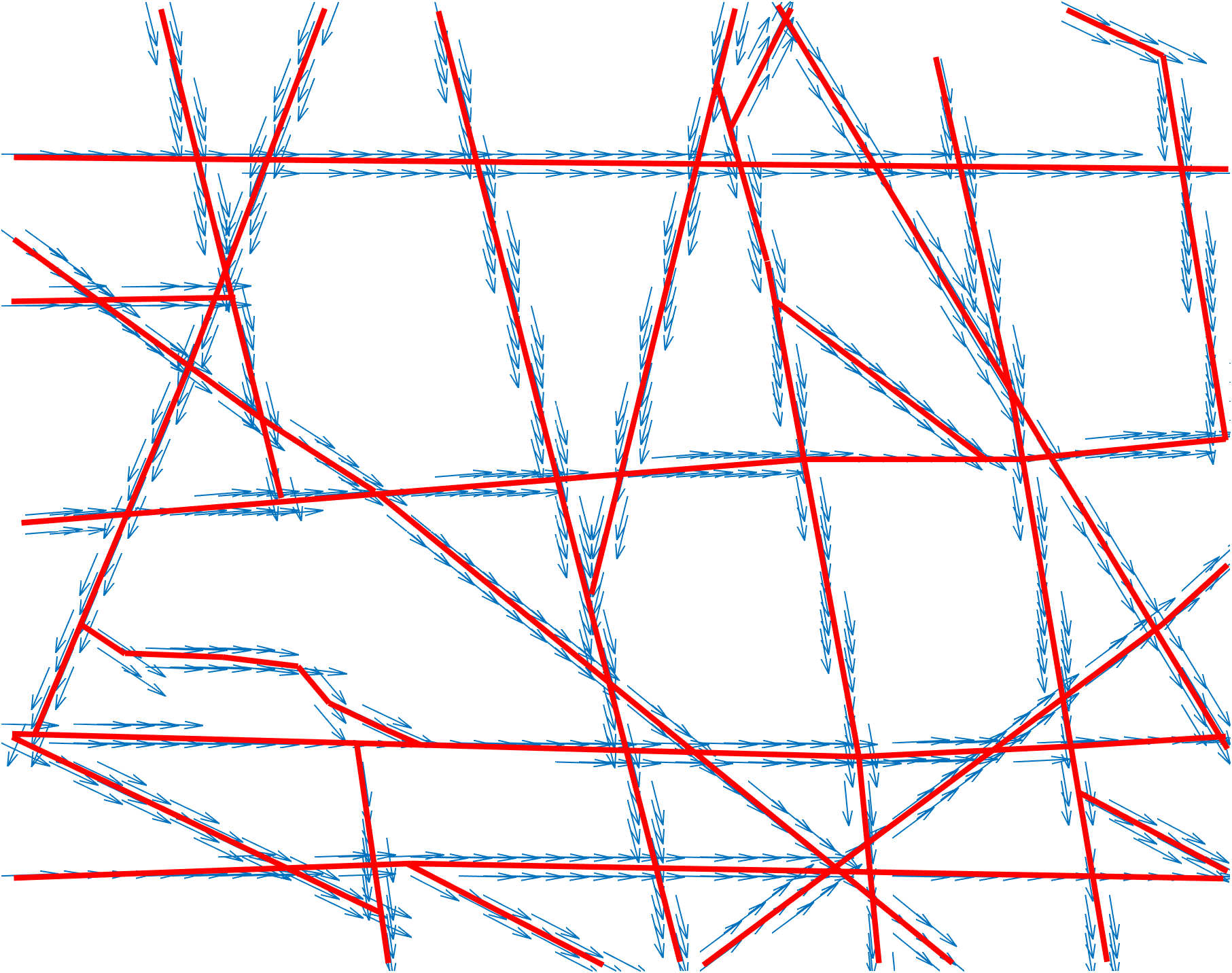}\label{fig:Wolf_c}}\quad\, \, 
    \subfigure[Normalized MFPT $T(x)/ T_{\rm max}.$]
    {\includegraphics[width=4.5cm,height=4.5cm]{./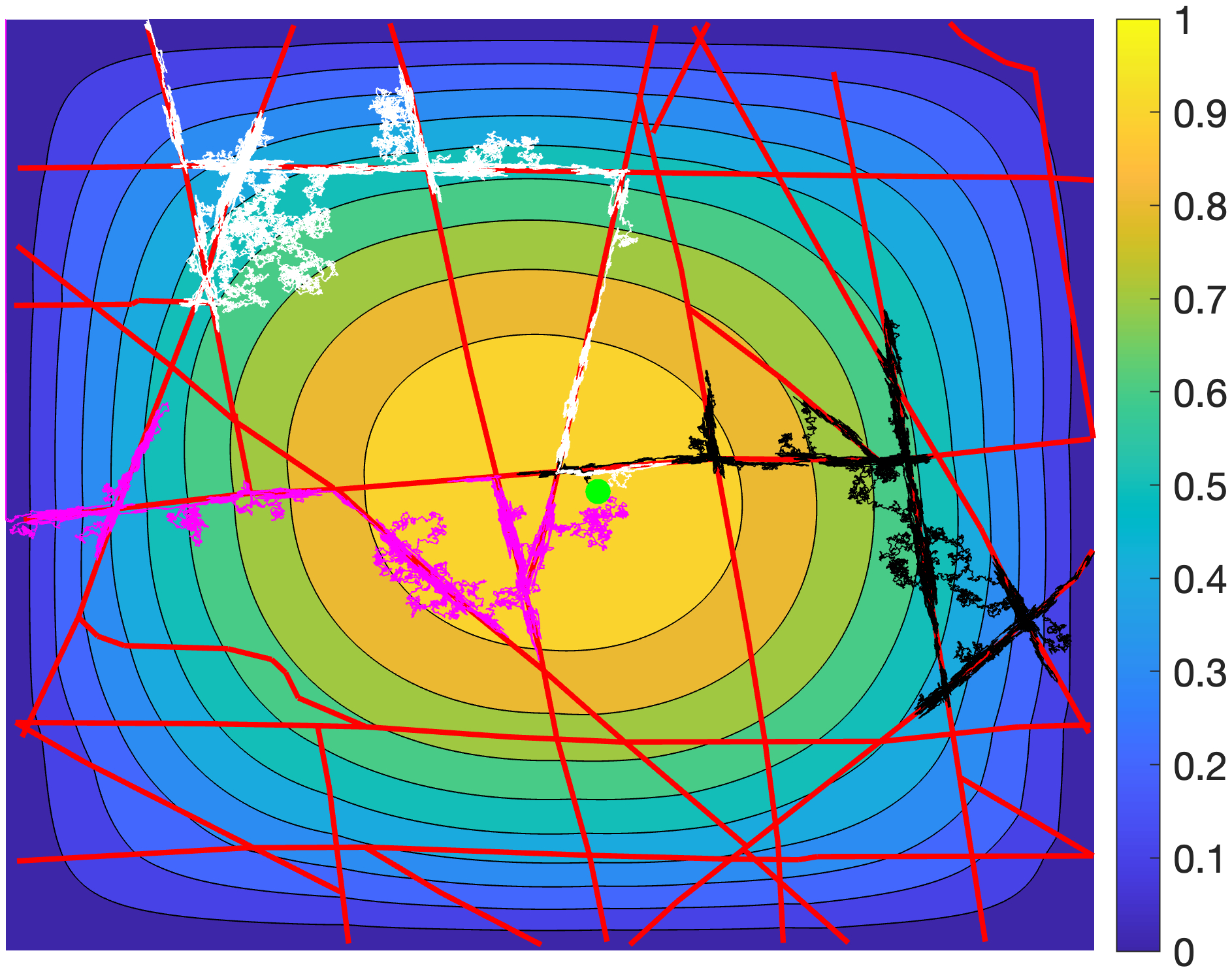}
    \label{fig:Wolf_d}}
    \caption{Calculation of the MFPT on an ecological landscape. Panel (a): Aerial snapshot of the boreal forest in Western Canada. Panel (b): A digitized and thresholded version of the same image highlighting roads and seismic lines with  bidirectional information. Panel (c): Contour plot of the MFPT as derived from
    \eqref{anisotropicMFPT3}, \eqref{eq:DiffVonMises}  and \eqref{eq:linear_eqn} using the method described in Section \,\ref{sec:linear}. Also shown are three single trajectories from particle simulations of \eqref{kinetic1}, each initialized at the origin with parameters $\mu = 10^4$, $\sigma = 10^4$ and bimodal von-Mises turning kernel \eqref{eq:vonMises}. Particle trajectories remain closely anchored to the linear segments before eventual absorption at different edges of $\partial\Omega$. All quantities are in arbitrary units.}
    \label{fig:Wolf}
\end{figure}

Similarly to what done in Section \,\ref{sec:linear}, the MFPT equation \eqref{anisotropicMFPT3} is solved on the image's rectangular domain with $k(x)$ given in \eqref{eq:linear_eqn} and the direction field shown in Fig.\,\ref{fig:Wolf_c}. 
We allow the random walker to exit the domain through any of the edges of the image and apply absorbing (Dirichlet) boundary conditions on $\partial\Omega$. 
The contour plot of the resulting numerical solution of \eqref{anisotropicMFPT3} is shown in Fig.\,\ref{fig:Wolf_d}. We observe a similar phenomenon to that observed in Fig.\,\ref{fig:MFPT_linear} where the superposition of linear features with different angles yields an apparently isotropic MFPT.  Fig.\,\ref{fig:Wolf_d} also displays three single trajectories obtained from stochastic simulations of \eqref{kinetic1} which shows strong alignment on the linear features. Hence, the occupation density of particles is strongly localized on the anisotropic features, while the MFPT is modulated to by these features to a much lesser degree. This, of course, is because the MFPT is the average
of the mean first passage time derived from all possible trajectories. 

\section{Conclusions}\label{s:conclusion}
Kinetic transport equations form a large class of models that are used in many applications from physics, engineering, material sciences. Here we focus on biological applications, which include a wide variety species and spatio-temporal scales such as {\textit{E.\, coli}} run and tumble movement \cite{OthmerXue}, brain tumor invasion \cite{SwanHillen}, sea turtle orientation \cite{PaHiTurtle}, 
DNA repair mechanisms \cite{Fok2008}, and many more. 

As it turns out, the general transport equation \eqref{kinetic1} is too general to derive a meaningful mean first passage time equation. Hence, we focused 
on three cases: the {\it diffusive}, {\it anisotropic}, and {\it parabolic limit} cases, all of which are relevant to biology. Each of these cases translates into a set of assumptions on the model parameters and in particular on the integral kernel $K(x,v,v')$. We summarize these assumptions in Table \ref{tab:assumptions} for accessible comparison. 
The diffusive and anisotropic cases are not disjoint, since, for example, the constant kernel $K(x,v,v')=|V|^{-1} $ is contained in both. 

\begin{table}[]
\footnotesize
    \centering
    \renewcommand{\arraystretch}{1.5}
    \begin{tabular}{l|c|l}
    \hline
     {\bf Case} & {\bf Reference}    & {\bf Assumptions}  \\[1ex]
    
\hline
General assumption & (\ref{assumptionsT}) &  $K\geq 0,\quad K \in L^2(V\times V),\quad \int_VK(x, v,v') dv =1.$ \\[1ex]
\hline
Diffusive case & (\ref{assumptionsT}) + (\ref{diffusiveK}) & $ K(x,v,v')=K(x,v,-v'),\quad  \int_V K(x,v,v') dv' =1$.\\[1ex]
\hline
Anisotropic case & (\ref{assumptionsT}) + \eqref{T3assumpt} & $K(x,v,v') = q(x,v),\quad   q(x,-v) = q(x,v)$.\\[1ex]
\hline
Parabolic scaling & (\ref{assumptionsT}) + \eqref{T3assumpt} & $\tau =\ep^2 t, \quad \xi=\ep x.$\\
&+ (\ref{scales},\ref{expansion}) &  $  p(\xi,v,\tau) =p_0(\xi,v,\tau) +\ep  p_1(\xi,v,\tau) +\ep^2 p_2(\xi,v,\tau) +\cdots$\\[4pt]
    \hline
    \end{tabular}
   
    \caption{Summary of the basic assumptions for the various cases studied in this manuscript.} 
    \label{tab:assumptions}
\end{table}

In our analysis we find two types of MFPT equations. For the diffusive and anisotropic cases we find an integro-PDE for the MFPT $\Theta(x,v)$, \eqref{generalMFPT} and \eqref{anisotropicMFPT}, respectively. The expected exit time $\Theta(x,v)$ depends on the initial location $x$ and the initial velocity $v$, and the explicit dependence on $v$ makes this equation so interesting. A systematic study of equations of the type \eqref{generalMFPT} and \eqref{anisotropicMFPT} has not been done yet. 
In the parabolic scaling case, the $v$-dependence disappears, and we obtain the anisotropic MFPT equation \eqref{anisotropicMFPT3} 
for $T(x)$. This model fits within the classical theory of MFPT equations, as, for example, illustrated in \cite{WEISS1967, Risken1992, VanKampen2007, REDNER2001}, and much is known about its properties and its solutions. 

Solutions to MFPT equations depend, of course, on the domain $\Omega$ and the boundary conditions on $\partial \Omega$. For the parabolic limit case \eqref{anisotropicMFPT3}, boundary conditions can be included in a standard way, using $T(x)=0$ on the absorbing part of the boundary and $\hat n \cdot \mathbb{D}(x) \nabla T(x) =0$ on the no-flux part of the boundary, where $\hat n$ denotes an outward normal vector on the boundary. The domain can be connected or disconnected, it can have holes and other special shapes, and the literature on various domains is vast (for example \cite{REDNER2001,GREBENKOV2016,CHOU2014,ward2010a,Venu2015}). 

For the diffusive or the anisotropic case, however, boundary conditions need to include the velocity $v$. A reasonable choice to describe absorbing boundaries is to assume that a random walker on the boundary $\partial \Omega$ that starts in an outward direction, is lost immediately. In mathematical terms, for any $x\in \partial\Omega$ such that $\hat n(x)$ denotes the unit outward normal vector, we have
\begin{equation}\label{Dbc}
\Theta(x,v) = 0\quad \mbox{for all}\quad v\in V: \; v\cdot \hat n>0. 
\end{equation}
On the reflecting parts of the boundary various conditions can be stipulated. They often take the form of an integral relation \cite{schwetlick}. Given $x\in \partial \Omega$, for each $v\in V$ with $v\cdot \hat n(x)\leq 0$, we define 
\begin{equation}\label{Nbc}
\Theta(x,v) = \int_{\{v'\cdot\hat n>0\}} I(x,v,v') \Theta(x,v') dv',
\end{equation}
with an appropriate integral kernel $I$. 
Combining the MFPT equations \eqref{generalMFPT} and \eqref{anisotropicMFPT} with these type of boundary conditions \eqref{Dbc} and \eqref{Nbc} provides a formidable mathematical and numerical challenge. It will require substantial new ideas to develop theories for existence, uniqueness, boundedness etc. We hope our work stimulates further research in this direction. 

It should be noted that the MFPT equations derived here have no drift term, i.e. no first order term. This is by design, since we assume symmetry of the underlying kernels, $K(x,v,v')=K(x,v,-v')$ and $q(x,v)=q(x,-v)$,  see Table \ref{tab:assumptions} above. These symmetry conditions ensure that certain integrals in our derivation are zero. If these conditions are relaxed, those integrals stay, and we expect them to lead to drift terms in the end result. We did not work out the details here, since 
our focus was on developing the overall framework to derive the MFPT equation for transport equations. By relaxing the above assumptions, more general models can be derived. 

The bimodal von-Mises distribution is an excellent non-trivial example to study the models developed here. It obeys the conditions 
listed above and allowed us to analytically solve \eqref{anisotropicMFPT3} for several movement types 
and geometries, including radial and circular motion on a disk and on an annulus. We also 
found numerical solutions for a specific preferred orientation (or orientations) as provided 
by the underlying environment. Finally, we compared the MFPTs in these various settings
to the isotropic case and discussed under what conditions anisotropy yields longer
or shorter MFPTs to the boundary of given domains. Our work provides a framework to 
quantify important timescales associated to 
biological or animal movement processes without resolving the entire forward process. 

There are many avenues for future work arising from this study. Our examples of MFPT 
calculation in sections \ref{sec:linear} and \ref{sec:wolf} suggest that the presence of 
multiple linear anisotropic features produces a superposition that is independent of 
individual heterogeneities. Hence, we hypothesize that a homogenization limit can be 
obtained to reflect an effective description of diffusion processes in crowded domains. 
Our principal consideration of the bimodal von-Mises produces unbiased one-dimensional 
diffusion along linear features. It is natural to extend the formulations obtained herein 
towards more general classes of distributions including single modal von-Mises and multi-
modal distributions such as Kent and Bingham distributions in two-dimensions or higher.
These may be more difficult to evaluate as, for example, the symmetry condition $q(x,v) = q(x,-v)$ does not hold for the single modal von-Mises distribution in \eqref{vonMises}. 
Other avenues of exploration include deeper investigations of ecological scenarios, for 
example, by studying how the MFPT changes upon allowing animals to exit only from select
portions of the boundary, modifying the parameter $k_0$ or allowing it to have different 
values along different segments for the purpose of optimally steering herds of animals
to given locations, where, for example, foraging conditions are more suitable. \al{The variance in the MFPT plays an important role in the context of ecological diffusion problems \cite{Venu2015}, hence extensions of this work to the derivation of higher order moments would be valuable. An even more ambitious objective would be to derive the probability density $-s_t(x,v,t)$ from which all moments could be generated.}

\section*{Acknowledgments}
We acknowledge fruitful discussions with Kevin Painter and Stuart Johnson \al{and very helpful comments from two anonymous referees.}
 TH is supported through a discovery grant of the Natural Science and Engineering Research Council of Canada (NSERC), RGPIN-2023-04269. AEL acknowledges support from NSF grant DMS-2052636.
MRD acknowledges support from the Army Research Office through grant W911NF-23-1-0129 and from the National Science Foundation through grant MRI-2320846.

\appendix

\section{Discretization of the anisotropic MFPT equation}
Our numerical simulation of \eqref{anisotropicMFPT} with diffusivity tensor \eqref{radialD} is based on finite difference simulation in \textsc{Matlab}. We write the following for $x=(x_1,x_2)$ and $\nabla = (\partial_{x_1},\partial_{x_2})$ 
\begin{equation}\label{eqn:MainFD}
\mathbb{D}(x) : \nabla\otimes \nabla T = D_{11} \partial^2_{x_1 x_1} T + 2 D_{12}\partial^2_{x_1x_2}T + D_{22} \partial^2_{x_2 x_2}T = -1, \qquad \mathbb{D}(x) = \begin{pmatrix} D_{11} & D_{12}\\D_{12} & D_{22}\end{pmatrix}.
\end{equation}
In the formulation above, the tensor coefficients $D_{11}$, $D_{12}$ and $D_{22}$ are spatially varying functions defined in \eqref{eqn:DiffTensor}. A mesh for the rectangular domain $\Omega = (a,b)\times(c,d)$ is formed at points $x_1^{(j)} = a + (j-1)\Delta x_1$ for $j = 1,\ldots, N_1$ and $x_2^{(k)} = c + (k-1)\Delta x_2$ with $k = 1,\ldots, N_2$, such that 
\[
\frac{b-a}{N_1 -1} = \Delta x_1, \qquad \frac{d-c}{N_2 -1} = \Delta x_2.
\]
We define $T^{(j,k)} = T(x_1^{(j)},x_2^{(k)})$ and utilize the standard finite difference approximations
\bsub\label{app:dis_FD}
\begin{gather}
\partial^2_{x_1x_1}T^{(j,k)} = \frac{T^{(j+1,k)} - 2T^{(j,k)} + T^{(j-1,k)} }{\Delta x_1^2}, \qquad \partial^2_{x_2x_2}T^{(j,k)} = \frac{T^{(j,k+1)} - 2T^{(j,k)} + T^{(j,k-1)} }{\Delta x_2^2},\\[5pt]
\partial^2_{x_1x_2}T^{(j,k)} = \frac{T^{(j+1,k+1)} - T^{(j+1,k-1)} - T^{(j-1,k+1)} + T^{(j-1,k-1)} }{4\Delta x_1 \Delta x_2}.
\end{gather}
\esub
Hence, combining the discretizations \eqref{app:dis_FD} with the equation \eqref{eqn:MainFD}, we arrive at the discrete linear system
\begin{gather*} D_{11}^{(j,k)} \frac{T^{(j+1,k)} - 2T^{(j,k)} + T^{(j-1,k)} }{\Delta x_1^2} +  
D_{12}^{(j,k)} \frac{T^{(j+1,k+1)} - T^{(j+1,k-1)} - T^{(j-1,k+1)} + T^{(j-1,k-1)} }{2\Delta x_1 \Delta x_2}
\\[4pt]
+\ D_{22}^{(j,k)} \frac{T^{(j,k+1)} - 2T^{(j,k)} + T^{(j,k-1)} }{\Delta x_2^2} = -1.
\end{gather*}
In our simulation, we apply the Dirichlet boundary so that $T^{(j,k)}=0$ for any $j \in \{1,N_1\}$ and any $k\in\{1,N_2\}$.

\bibliographystyle{siam}
\bibliography{refs}

\end{document}